\documentclass[sn-mathphys,Numbered]{sn-jnl}

\usepackage{graphicx}%
\usepackage{multirow}%
\usepackage{amsmath,amssymb,amsfonts}%
\usepackage{amsthm}%
\usepackage{mathrsfs}%
\usepackage[title]{appendix}%
\usepackage{xcolor}%
\usepackage{textcomp}%
\usepackage{manyfoot}%
\usepackage{booktabs}%
\usepackage{algorithm}%
\usepackage{algorithmicx}%
\usepackage{algpseudocode}%
\usepackage{listings}
\usepackage[all, cmtip]{xy}%
\usepackage{xcolor}

\newcommand{\Tr}{{\rm Tr}}
\newcommand{\N}{{\rm N}}
\newcommand{\gf}{ {{\mathbb F}} }

\newtheorem{theorem}{Theorem}

\newtheorem{proposition}[theorem]{Proposition}%
\newtheorem{lemma}{Lemma}
\newtheorem{remark}{Remark}%

\newtheorem{definition}{Definition}%
\newtheorem{corollary}{Corollary}

\begin{document}

\title[The compositional inverses of the permutation polynomials of the form $x+\gamma\Tr^{q^n}_q(H(x))$ over $\gf_{q^n}$]{The compositional inverses of the permutation polynomials of the form $x+\gamma\Tr^{q^n}_q(H(x))$ over $\gf_{q^n}$}

\author[1]{\fnm{} \sur{Danyao Wu}}\email{wudanyao111@163.com}
\author*[2]{\fnm{} \sur{Xuan  Pang}}\email{pangxuan202503@163.com}
\author[1]{\fnm{} \sur{Zilong He}}\email{zilonghe@dgut.edu.cn}
\author[2]{\fnm{} \sur{Pingzhi Yuan}}\email{yuanpz@scnu.edu.cn}


\affil[1]{\orgdiv{School of Computer Science and Technology}, \orgname{Dongguan University of Technology}, \orgaddress{
		\city{Dongguan}, \postcode{523808}, 
		\country{China}}}

\affil[2]{\orgdiv{School of Mathematics}, \orgname{South China Normal University}, \orgaddress{
		\city{Guangzhou}, \postcode{510631},
		\country{China}}}

\abstract{
	This paper focuses on computing the compositional inverses of permutation polynomials of the form $x+\gamma \operatorname{Tr}_{q}^{q^{n}}(H(x))$ over finite fields via the local method. We explicitly construct compositional inverses for four families of permutation polynomials of this type over $\mathbb{F}_{q^2}$, another four families over $\mathbb{F}_{q^3}$, and one general family over $\mathbb{F}_{q^n}$. The closed-form inverse expressions derived in this work supplement the theory of trace permutation polynomials.
	}

\keywords{finite field, compositional inverse, permutation polynomial,  trace function}


\pacs[MSC Classification]{11T06; 12E10}

\maketitle
\vspace{-1cm}
\qquad\,\date{\today}
\section{Introduction}\label{sec1}

 Let  $\gf_q$ be the finite field with $q$ elements,  where $q$ is a prime power, and
let $\gf_q[x]$
be the ring of polynomials in a single indeterminate $x$ over $\gf_q$. A polynomial
$f \in\gf_q[x]$ is called a {\em permutation polynomial} of $\gf_q$ if its
associated polynomial mapping $f: c\mapsto f(c)$ from $\gf_q$ to itself is a bijection. The unique polynomial denoted by $f^{-1}(x)$ over $\gf_q$
such that $f(f^{-1}(x))\equiv f^{-1}(f(x)) \equiv x \pmod{x^q-x}$ is called the compositional inverse of $f(x).$ Furthermore,  $f(x)$ is called  an involution when $f^{-1}(x)=f(x).$

The study of permutation polynomials and their compositional inverses over finite
fields in terms of their coefficients is a classical and difficult subject which
attracts people's interest partially due to their wide applications in coding theory
\cite{ding2013cyclic,ding2014binary,laigle2007permutation},
cryptography \cite{rivest1978method,schwenk1998public}, combinatorial design theory \cite{ding2006family}, and other areas of mathematics and engineering \cite{lidl1997finite,lidl1994introduction}.
In general, discovering new classes of permutation polynomials is challenging, and computing the coefficients of their compositional inverses seems even more so, except for several classical classes such as
monomials, linearized polynomials, Dickson polynomials, which have nice structure.
In recent years, the compositional inverses of several classes of permutation polynomials, in either explicit or implicit forms, have been investigated.    Wang \cite{wang2024survey} has  summarized the current methods used in the study of the  compositional inverses of permutation polynomials, including the experimental method, the power sum method, the matrix method, the group algebra method, the piecewise method, the decomposition method,  the commutative diagram method, and the local method.   We refer readers to  \cite{wang2024survey} for more details.

In 2008, Charpin and Kyureghyan investigated permutation polynomials of the form
$G(x) + \gamma \operatorname{Tr}_2^{2^n}(H(x))$,
establishing six families of such polynomials by leveraging the linear structures of Boolean functions. Notably, the technique of deriving functions of type
$G(x) + \gamma \operatorname{Tr}_2^{2^n}(H(x))$
from
$G(x)$
via addition of certain Boolean functions is recognized as the ``switching method'', originally introduced by Edel and Pott \cite{edel2009new} in the context of APN (Almost Perfect Nonlinear) functions, which are of significant cryptographic importance. Further related results can be found in \cite{blondeau2015perfect,carlet2021boolean}.

Following Charpin and Kyureghyan's initial contribution, researchers have extensively explored permutation polynomials of this structure. For instance, Charpin and Kyureghyan \cite{charpin2009when} extended their findings to fields of odd characteristic in 2009. In 2011, Kyureghyan \cite{kyureghyan2011constructing} introduced a constructive approach for broad families of permutation polynomials of the form
$L(x) + L(\gamma)h(f(x))$,
where
$L$
is an
$\mathbb{F}_q$-linear permutation over
$\mathbb{F}_{q^n}$,
$h: \mathbb{F}_q \to \mathbb{F}_q$,
and
$\gamma \in \mathbb{F}_{q^n}$
serves as a
$b$-linear structure of
$f: \mathbb{F}_{q^n} \to \mathbb{F}_q$. More recent works by Cepak et al. \cite{cepak2017permutations} and Xie et al. \cite{xie2021several} have further contributed examples following this framework.

In 2016, Kyureghyan and Zieve \cite{kyureghyan2016permutation} conducted an exhaustive search for permutation polynomials of the type
$x + \gamma \operatorname{Tr}_{q}^{q^n}(x^k)$,
where
$\operatorname{Tr}_{q}^{q^n}(x) = x + x^q + x^{q^2} + \cdots + x^{q^{n-1}}$
denotes the relative trace from
$\mathbb{F}_{q^n}$
to
$\mathbb{F}_q$,
with parameters
$\gamma \in \mathbb{F}_{q^n}^*$,
odd
$q$,
$n > 1$,
and
$q^n < 5000$.
They subsequently generalized the findings into nine infinite families, leaving five exceptional cases unexplained. Later, Ma and Ge \cite{ma2017note} extended two of these exceptional cases into a new infinite family.

Inspired by Kyureghyan and Zieve's work, Li et al. \cite{li2018permutation} in 2018 classified all permutation polynomials of the form
$cx + \operatorname{Tr}_{q}^{q^n}(x^k)$
for
$q = 2^m$,
$c \in \mathbb{F}_q^*$,
$m > 1$,
and
$mn < 14$,
proposing fifteen new families. However, four examples listed in \cite[Table 1]{li2018permutation} remained without generalization.

In 2019, Zha et al. \cite{zha2019permutation} examined permutation polynomials of the form
$x + \operatorname{Tr}_q^{q^n}(h(x))$
over
$\mathbb{F}_{q^n}$,
presenting several new families that accounted for the remaining exceptional cases of Kyureghyan and Zieve \cite{kyureghyan2016permutation} and one case from Li et al. \cite{li2018permutation}.

More recently, research on permutation polynomials of the form \cite{yuan2024algebraicstructurepermutationalpolynomials}
 \begin{equation}\label{4.1}
 	F(x) = G(x) + \gamma \Tr_q^{q^n}(H(x))
 \end{equation}
has continued to develop. Yuan \cite{yuan2024algebraicstructurepermutationalpolynomials} constructed new classes of permutation polynomials of the form \eqref{4.1}
 by introducing a novel algebraic structure for permutation polynomials over $\gf_{q^n}$. This construction provided an answer to an open problem proposed by Charpin and Kyureghyan in \cite{charpin2009when}, which asked to characterize a class of permutation polynomials of the type (\ref{4.1}) in which $G(x)$ is neither a permutation nor a linearized polynomial. Further results of this type include the work of Jiang et al. \cite{jiang2025new}, who investigated permutation polynomials of the form $x + \gamma \Tr_q^{q^2}(h(x))$ over finite fields of even characteristic, and Jiang et al. \cite{jiang2026new}, who studied permutation polynomials of the form $x + \gamma \Tr_q^{q^3}(h(x))$ over finite fields of even characteristic.

 Despite these developments, the compositional inverses of permutation polynomials of the form $F(x) = x + \gamma \Tr_q^{q^n}(H(x))$ remain largely unexplored. This paper focuses on determining the compositional inverses of certain permutation polynomials studied in \cite{zha2019permutation,jiang2026new}, aiming to contribute to this underexamined aspect of the theory. Specifically, this paper adopts the local method to derive the compositional inverses of nine families of permutation polynomials with the form $F(x) = x + \gamma \Tr_q^{q^n}(H(x))$. Two technical routes are adopted in our derivation. On the one hand, we provide the explicit solution formula to the equation $F(x)=a$ for arbitrary $a \in \mathbb{F}_{q^n}$, based on which the compositional inverse of $F(x)$ is established via the local method. On the other hand, we first construct a polynomial $\varphi(x)$ satisfying $\varphi(x)\circ F(x)=\Tr_q^{q^n}(H(x))$, and then further deduce the compositional inverse of $F(x)$ by virtue of the local method.

The remainder of this paper is structured as follows. 
Section~2 introduces the necessary notation and preliminary results. 
In Section~3, we determine the compositional inverses of four classes of 
permutation polynomials of the form
$
x + \gamma \Tr_q^{q^2}(H(x))$ over $ \gf_{q^2}.
$
Section~4 is devoted to constructing the compositional inverses of four 
classes of permutation polynomials of the form
$
x + \gamma \Tr_q^{q^3}(H(x))$ over $ \gf_{q^3}.
$
Finally, Section~5 presents the compositional inverses of one class of 
permutation polynomials of the form
$
x + \gamma \Tr_q^{q^n}(H(x)) $ over $ \gf_{q^n}.
$

\section{Preliminaries}
In this section, we present some auxiliary results that will be needed in the
sequel.

Let $q$ be a prime power and  $n$ a positive integer. The $trace function $ $\Tr_{q}^{q^n}(\cdot)$ from $\gf_{q^n}$ to $\gf_{q}$ is defined by
$$\Tr_{q}^{q^n}(x)=x+x^{q}+x^{q^2}+\cdots+x^{q^{n-1}}, \, \,  x \in \gf_{q^n},$$
and the norm function $\N^{q^n}_q(x)$ from $\mathbb{F}_{q^n}$ to $\mathbb{F}_q$ is defined by
$$
\N^{q^n}_q(x) =x^{(q^n-1)/(q-1)}.
$$

The following lemma is crucial for our development, and we will invoke it multiple times in what follows.

 \begin{lemma}\label{leff-}\cite[Theorem 2.2]{yuan2024local} (Local method)
 	Let $q$ be a prime power and $f(x)$ be a polynomial over $\gf_q.$ Then
 	$f(x)$ is a permutation polynomial over $\gf_q$ if and only if there exist nonempty
 	subsets $S_i$, $i=1, 2, \cdots, t$ of $\gf_q$ and maps $\psi_i(x): \gf_q \rightarrow S_i$, $i=1, 2, \cdots, t$ such that $\psi_i(x)\circ f(x)=\varphi_i(x),$ $i=1, 2, \cdots, t$
 	and $x=F(\varphi_1(x), \varphi_2(x), \cdots, \varphi_t(x)),$ where $F(x_1, x_2, \cdots, x_t)\in \gf_q[x_1, x_2, \cdots, x_t].$ Moreover, the compositional inverse of $f(x)$ is given by
 	$$f^{-1}(x)=F(\psi_1(x), \psi_2(x), \cdots, \psi_t(x)).$$
 \end{lemma}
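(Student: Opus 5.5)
The plan is to prove the two implications separately and then read off the inverse formula from the sufficiency direction. Throughout I will identify a map $\mathbb{F}_q\to\mathbb{F}_q$ with the unique polynomial of degree $<q$ representing it (Lagrange interpolation). Equalities such as $\psi_i(x)\circ f(x)=\varphi_i(x)$ and $x=F(\varphi_1(x),\dots,\varphi_t(x))$ are then read as equalities of functions on $\mathbb{F}_q$, that is, as congruences modulo $x^q-x$.

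\emph{Necessity.} Suppose $f$ permutes $\mathbb{F}_q$. Take $t=1$ and $S_1=\mathbb{F}_q$. Let $\psi_1$ be the inverse map of $f$, realised as a polynomial by interpolation. Then $\psi_1\circ f=\varphi_1$ with $\varphi_1(x)=x$. With $F(x_1)=x_1$ we get $x=F(\varphi_1(x))$. This gives the required data, and $F(\psi_1)=\psi_1=f^{-1}$ is consistent with the formula.

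\emph{Sufficiency and the inverse.} Assume the data $S_i,\psi_i,\varphi_i,F$ are given, and set $g(x)=F(\psi_1(x),\dots,\psi_t(x))$. Since $F$ is a polynomial and each $\psi_i$ is a function on $\mathbb{F}_q$, $g$ is a well-defined function on $\mathbb{F}_q$, hence a polynomial modulo $x^q-x$. For every $c\in\mathbb{F}_q$,
\[
g(f(c))=F\bigl(\psi_1(f(c)),\dots,\psi_t(f(c))\bigr)=F\bigl(\varphi_1(c),\dots,\varphi_t(c)\bigr)=c .
\]
So $g\circ f$ is the identity on $\mathbb{F}_q$, and therefore $f$ is injective on $\mathbb{F}_q$. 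Because $\mathbb{F}_q$ is finite, $f$ is then bijective. A left inverse of a bijection is its two-sided inverse, so $f\circ g$ is also the identity. Hence $g\equiv f^{-1}\pmod{x^q-x}$, which is the stated formula.

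There is no real obstacle here. The only point needing care is bookkeeping: everything must be interpreted as functions on $\mathbb{F}_q$ (congruences mod $x^q-x$) rather than as formal polynomial identities. The sets $S_i$ play no role beyond being the codomains of the $\psi_i$.
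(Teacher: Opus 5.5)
Your proof is correct and is essentially the standard argument. The paper gives no proof of its own and only cites \cite[Theorem 2.2]{yuan2024local}. There, necessity uses the same trivial data $t=1$, $\psi_1=f^{-1}$, $F(x_1)=x_1$. Sufficiency likewise observes that $F(\psi_1,\dots,\psi_t)\circ f$ is the identity on $\gf_q$, so $f$ is injective, hence bijective on a finite set, and this left inverse is $f^{-1}$. Reading every identity as an equality of functions on $\gf_q$, i.e.\ modulo $x^q-x$, is exactly the right bookkeeping.
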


The local method is a powerful and effective tool for constructing permutation polynomials over finite fields and computing their compositional inverses.  Originating from number theory and algebraic theory, this method  adopts the local--global principle to characterize polynomials over finite fields. Specifically, it deduces the global permutation property of a polynomial from its local characteristics on subsets of the finite field. The essence of the local method lies in characterizing the inherent functional relationship between the polynomial $f(x)$ and the variable $x$, which is achieved by analyzing the composite mappings $\psi_i\circ f(x)$ for a family of properly chosen mappings $\psi_i(x)$.

We next present two lemmas concerning the compositional inverses of linearized permutation polynomials and then use them to derive an explicit expression for the solution of the linearized equation.

\begin{lemma}\label{binomial}\cite[Theorem 2.1]{tuxanidy2017compositional}
	Let $L_r(x) = x^{q^r}-ax$, where $a\in \gf_{q^m}^*$, and $1\leq r\leq m-1$. Then $L_r(x)$ is a permutation polynomial over $\gf_{q^m}$ if and only if the norm $\N^{q^m}_{q^d}(a)\neq 1,$ where $d=\gcd(m ,r).$ In this case, its inverse on $\gf_{q^m}$ is
	$$L_r^{-1}(x)=\frac{\N^{q^m}_{q^d}(a)}{1-\N^{q^m}_{q^d}(a)}\sum_{i=0}^{m/d-1}a^{-\frac{q^{(i+1)r}-1}{q^r-1}}x^{q^{ir}}.$$
\end{lemma}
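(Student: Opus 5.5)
We prove the statement of Lemma~\ref{binomial} in two parts: first the permutation criterion, then the inverse formula, checked by composing $L_r$ with the proposed expression.

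\emph{Permutation criterion.} Since $L_r$ is $\gf_q$-linear on $\gf_{q^m}$, it is a permutation if and only if its kernel is trivial. Suppose $x\neq 0$ and $x^{q^r}=ax$. Then $x^{q^r-1}=a$, so $a$ is a $(q^r-1)$-th power in $\gf_{q^m}^*$. Because $\gcd(q^r-1,q^m-1)=q^d-1$, the $(q^r-1)$-th powers in $\gf_{q^m}^*$ coincide with the $(q^d-1)$-th powers. These are exactly the elements of norm $1$ to $\gf_{q^d}$, since $\N^{q^m}_{q^d}(y)=y^{(q^m-1)/(q^d-1)}$ and the group is cyclic. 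Hence the kernel is nontrivial if and only if $\N^{q^m}_{q^d}(a)=1$.

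\emph{Inverse formula.} Write $k=m/d$ and $\sigma(x)=x^{q^r}$. Since $\gcd(r,m)=d$, the map $\sigma$ generates $\mathrm{Gal}(\gf_{q^m}/\gf_{q^d})$ and has order $k$. Set $e_i=\frac{q^{(i+1)r}-1}{q^r-1}=1+q^r+\cdots+q^{ir}$, so that
\[
a^{e_i}=a\,\sigma(a)\cdots\sigma^i(a),\qquad a^{e_{k-1}}=\N^{q^m}_{q^d}(a)=:N .
\]
Put $M(x)=\sum_{i=0}^{k-1}a^{-e_i}x^{q^{ir}}$; it suffices to compute $L_r(M(x))$ for $x\in\gf_{q^m}$. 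Using $\sigma(a^{-e_i})=a^{-(e_{i+1}-1)}=a\cdot a^{-e_{i+1}}$, we get
\[
M(x)^{q^r}=\sum_{i=0}^{k-1}a\,a^{-e_{i+1}}x^{q^{(i+1)r}} .
\]
Subtracting $aM(x)=\sum_{i=0}^{k-1}a\,a^{-e_i}x^{q^{ir}}$, the sum telescopes. The only surviving terms are $a\,a^{-e_k}x^{q^{kr}}$ and $-a\,a^{-e_0}x$.

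Now $x^{q^{kr}}=x$ on $\gf_{q^m}$ because $m\mid kr$. Moreover $e_k=1+q^r e_{k-1}$, so $a^{e_k}=a\,\sigma(N)=aN$, since $N\in\gf_{q^d}$ is fixed by $\sigma$. Therefore
\[
L_r(M(x))=a(aN)^{-1}x-a\cdot a^{-1}x=(N^{-1}-1)x=\frac{1-N}{N}\,x .
\]
Multiplying by $\frac{N}{1-N}$, which is legitimate because $N\neq1$, gives $L_r(L_r^{-1}(x))=x$ on $\gf_{q^m}$. Since $L_r$ is a bijection, this one-sided identity shows the expression is the compositional inverse.

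The main point of care is the exponent bookkeeping: the identity $\sigma(a^{e_i})=a^{e_{i+1}-1}$, and the fact that $N$ is $\sigma$-invariant, which is what closes the telescoping cycle.
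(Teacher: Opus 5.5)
Your proof is correct. The paper does not prove this lemma. It quotes it from Tuxanidy and Wang \cite{tuxanidy2017compositional} and uses it only as a black box through Lemma~\ref{lelinearizedsolution}, so there is no in-paper argument to compare with.

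Your argument is a complete, self-contained replacement. The criterion follows from $\gcd(q^r-1,q^m-1)=q^d-1$ together with the identification of the $(q^d-1)$-th powers with the kernel of $\N^{q^m}_{q^d}$. The formula follows from the telescoping computation $L_r(M(x))=\frac{1-N}{N}x$. A side benefit is that it makes explicit that the norm in the formula, $a^{e_{k-1}}=a\,\sigma(a)\cdots\sigma^{k-1}(a)$, agrees with the paper's definition $a^{(q^m-1)/(q^d-1)}$.

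One step needs one more line of justification. To pass from $L_r(M(x))=\frac{1-N}{N}x$ to $L_r\bigl(\frac{N}{1-N}M(x)\bigr)=x$, you need more than $N\neq 1$: you need the scalar $c=\frac{N}{1-N}$ to pass through $L_r$. This holds because $c\in\gf_{q^d}$ is fixed by $\sigma$, so $L_r(cy)=c^{q^r}y^{q^r}-acy=c\,L_r(y)$. You already established $\sigma(N)=N$, so this is a one-line addition rather than a gap.
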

Based on Lemma \ref{binomial}, we derive the unique solution for a class of affine $q-$ polynomial over $\gf_{q^m}.$

\begin{lemma}\label{lelinearizedsolution}
	Suppose that the equation $x^{q^r}-ax-b=0$ has a unique solution in $\gf_{q^m}$, where $a, b\in \gf_{q^m}^*$ and $1\leq r\leq m-1$, then the solution to the equation is $$x=\frac{\N^{q^m}_{q^d}(a)}{1-\N^{q^m}_{q^d}(a)}\sum_{i=0}^{m/d-1}a^{-\frac{q^{(i+1)r}-1}{q^r-1}}b^{q^{ir}},$$
	where $d=\gcd(m ,r).$
\end{lemma}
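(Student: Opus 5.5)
The idea is to write the equation $x^{q^r}-ax-b=0$ as $L_r(x)=b$, where $L_r(x)=x^{q^r}-ax$ is the $q$-polynomial of Lemma \ref{binomial}. Then Lemma \ref{binomial} gives the solution at once.

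The first step is to show that $L_r$ permutes $\gf_{q^m}$. Since $L_r$ is $\gf_q$-linear on $\gf_{q^m}$, each equation $L_r(x)=c$ has either no solution or exactly $|\ker L_r|$ solutions, namely a coset of the kernel. By hypothesis $L_r(x)=b$ has exactly one solution, so $\ker L_r=\{0\}$. An injective map of the finite set $\gf_{q^m}$ into itself is a bijection, so $L_r$ is a permutation polynomial of $\gf_{q^m}$. By the ``only if'' direction of Lemma \ref{binomial}, this forces $\N^{q^m}_{q^d}(a)\neq 1$ with $d=\gcd(m,r)$. The coefficient $\N^{q^m}_{q^d}(a)/(1-\N^{q^m}_{q^d}(a))$ is therefore well defined.

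Next, Lemma \ref{binomial} gives the compositional inverse $L_r^{-1}$ as a function on $\gf_{q^m}$. The unique solution is $x=L_r^{-1}(b)$. Substituting $b$ into the formula of Lemma \ref{binomial} replaces each monomial $x^{q^{ir}}$ by $b^{q^{ir}}$, which gives exactly the stated expression.

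The only step needing any care is the passage from ``unique solution'' to ``$L_r$ is a permutation''. This is where linearity is used: uniqueness for a single right-hand side $b$ already forces the kernel to be trivial. Everything else is direct substitution.
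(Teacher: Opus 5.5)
Your proposal is correct and follows the same route as the paper: rewrite the equation as $L_r(x)=b$, deduce that $L_r$ permutes $\gf_{q^m}$, and evaluate the inverse from Lemma~\ref{binomial} at $b$. The only difference is that you justify the step from ``unique solution'' to ``permutation'' with the kernel/coset argument for the $\gf_q$-linear map $L_r$, whereas the paper simply asserts that step.
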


\begin{proof}
	Since the equation $x^{q^r}-ax-b=0$ has a unique solution in $\gf_{q^m},$  the linearized polynomial $x^{q^r}-ax$ is a permutation polynomial over $\gf_{q^m}.$
	
Moreover, solving
$x^{q^r}-ax-b=0$ is essentially equivalent to finding the preimage of $b$ under the mapping $x^{q^r}-ax$; or equivalently, evaluating its inverse at $b.$
Thus, by Lemma \ref{binomial}, the desired conclusion follows.
\end{proof}

 Zheng et al. \cite{zheng2019inverses} studied  the compositional inverse of the linearized polynomial of the form $x^4+bx^2+ax$ over $\gf_{2^m}.$ Its inverse has a close  relation with the sequence $$S_{-1}=0, S_0=1, S_i=b^{2^{i-1}}S_{i-1}+a^{2^{i-1}}S_{i-2},$$ where $1\leq i\leq m$ and $a, b \in \gf_{2^m}^*.$

\begin{lemma}\label{lelinearized421} \cite[Corollary 4]{zheng2019inverses}
	Let $L(x)=x^4+bx^2+ax$, where $a,  b  \in  \gf_{2^m}^*$ and $m>1.$ Then $L(x)$ is a permutation polynomial over $\gf_{2^m}$ if and only if $S_m+aS_{m-2}^2=1.$ Moreover, if $L(x)$ permutes $\gf_{2^m},$ the inverse of $L(x)$ over $\gf_{2^m}$ is given by
	$$L^{-1}(x)=\sum_{i=0}^{m-1}\left(S_{m-2-i}^{2^{i+1}}+a^{1-2^{i+1}}S_i\right)x^{2^i}.$$
	
\end{lemma}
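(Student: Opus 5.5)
The plan is to translate the statement into a linear recurrence on the coefficients of a candidate inverse and then analyse that recurrence with $2\times 2$ transfer matrices built from the sequence $S_i$. All indices are taken modulo $m$, which is allowed because $x^{2^m}\equiv x \pmod{x^{2^m}-x}$.

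\textbf{Step 1: the coefficient recurrence.} I look for the inverse as a $2$-polynomial $g(x)=\sum_{i=0}^{m-1}c_i x^{2^i}$. Reducing the composition modulo $x^{2^m}-x$ gives
$$g(L(x))\equiv\sum_{k=0}^{m-1}\bigl(c_{k-2}+b^{2^{k-1}}c_{k-1}+a^{2^k}c_k\bigr)x^{2^k}.$$
So $g\circ L\equiv x$ is exactly the cyclic linear system
$$c_{k-2}+b^{2^{k-1}}c_{k-1}+a^{2^k}c_k=\delta_{k,0},\qquad k\in\mathbb Z/m\mathbb Z .$$
Since $L$ is $\gf_2$-linear, it permutes $\gf_{2^m}$ if and only if it is invertible in the ring of $2$-polynomials modulo $x^{2^m}-x$. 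That holds if and only if this system has a unique solution, because the system matrix is, up to the Frobenius twist, the Dickson matrix of $L$.

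\textbf{Step 2: unrolling the recurrence.} Because $a\neq 0$, I can solve the homogeneous part forward. Each step $(c_{k-2},c_{k-1})\mapsto(c_{k-1},c_k)$ is multiplication by a $2\times 2$ companion-type matrix $A_k$ with entries $a^{-2^k}$ and $b^{2^{k-1}}a^{-2^k}$. After clearing the powers of $a$ (or running the recurrence backwards, in which case the matrices are $\begin{pmatrix} b^{2^{i-1}} & a^{2^{i-1}}\\ 1&0\end{pmatrix}$), an induction on the defining recurrence of $S_i$ identifies the entries of the product of the first $i$ matrices. They are $S_i$, $S_{i-1}$ and Frobenius-shifted copies $a\,S_{i-2}^{2}$-type terms; the squares arise because the shifted sequence $S_i'$ built from $a^2,b^2$ satisfies $S_i'=S_i^{2}$ for the appropriate indices. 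Going once around the cycle produces the monodromy matrix $M=A_{m-1}\cdots A_0$, whose entries are therefore expressed through $S_m, S_{m-1}, S_{m-2}$.

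\textbf{Step 3: the criterion.} With the inhomogeneity $\delta_{k,0}$, the cyclic system reduces to a $2\times 2$ system $(M-I)v=w$ for the initial pair $v=(c_{-2},c_{-1})$, and it is uniquely solvable if and only if $\det(M-I)\neq0$. In characteristic $2$,
$$\det(M-I)=\det M+\operatorname{tr}M+1 .$$
Here $\det M$ is a norm-type product of the $a^{2^i}$, and $\operatorname{tr}M$ is expressed via the $S$'s. I expect this determinant simplification to be the main obstacle: showing that $\det(M-I)\neq 0$ is equivalent to $S_m+aS_{m-2}^2=1$. My first attempt will be to use the symmetry identity $S_{m-1}^2=\text{(shifted }S_{m-1})$ together with $\det M=\N^{2^m}_{2}(a)=1$ (as $a\neq0$) to collapse the trace term.

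\textbf{Step 4: the inverse.} Once the criterion holds, I substitute the claimed coefficients $c_i=S_{m-2-i}^{2^{i+1}}+a^{1-2^{i+1}}S_i$ into the recurrence of Step 1.
\begin{itemize}
\item For interior indices $k$, the identity follows directly from the recurrence defining $S_i$, applied once to each of the two summands (the first summand after raising to the appropriate power of $2$).
\item At the wrap-around indices $k=0,1$, the identity reduces to $S_m+aS_{m-2}^2=1$ together with $S_{-1}=0$ and $S_0=1$.
\end{itemize}
This shows $g\circ L\equiv x$, so $g=L^{-1}$ by uniqueness.
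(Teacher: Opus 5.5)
The paper does not prove this lemma; it quotes it from Zheng et al. Your argument therefore has to stand on its own, and as written it does not close.

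\textbf{Step 3 is left open, and it is where the ``only if'' direction lives.} Put $M_i=\begin{pmatrix} b^{2^{i-1}} & a^{2^{i-1}}\\ 1 & 0\end{pmatrix}$ and $P_i=M_i\cdots M_1$. Induction on the defining recurrence gives $P_i=\begin{pmatrix} S_i & aS_{i-1}^2\\ S_{i-1} & aS_{i-2}^2\end{pmatrix}$.

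The monodromy of your coefficient recurrence is conjugate to $P_m^{\pm 1}$ or its transpose, whichever direction you run it. For the backward matrices $N_k=\begin{pmatrix} b^{2^{k-1}} & a^{2^{k}}\\ 1 & 0\end{pmatrix}$ one checks $(N_1\cdots N_m)^{T}=DP_mD^{-1}$ with $D=\mathrm{diag}(1,a)$. Note these are not the matrices you wrote: the exponent of $a$ is shifted.

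Since $\det P_m=a^{2^m-1}=1$, nothing needs collapsing: $\det(M-I)=\operatorname{tr}M=S_m+aS_{m-2}^2$. So your argument proves only that $L$ permutes iff $S_m+aS_{m-2}^2\neq 0$.

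The missing idea is that this trace lies in $\gf_2$. Apply Frobenius entrywise: $P_m^{(2)}=M_{m+1}\cdots M_2=M_1P_mM_1^{-1}$, using $M_{m+1}=M_1$ because $a^{2^m}=a$ and $b^{2^m}=b$. Hence $(\operatorname{tr}P_m)^2=\operatorname{tr}P_m$. Without this, ``$L$ permutes $\Rightarrow S_m+aS_{m-2}^2=1$'' is not established.

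\textbf{Step 4's interior check fails as stated.} Take the summand $c_i=S_{m-2-i}^{2^{i+1}}$. In $c_{k-2}+b^{2^{k-1}}c_{k-1}+a^{2^k}c_k$ the three terms involve $S_{m-k},S_{m-k-1},S_{m-k-2}$ raised to the different powers $2^{k-1},2^k,2^{k+1}$. No Frobenius twist of the defining recurrence can match that.

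After raising the whole expression to the $2^{m-k+1}$-th power, what you need is the reversed recurrence $S_{j+2}=bS_{j+1}^2+a^2S_j^4$ for $j\ge -1$. It follows from $P_{j+2}=P_{j+1}^{(2)}M_1$, that is, by peeling off the first factor rather than the last.

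This same identity, not the criterion, is what makes the wrap-around equation at $k=1$ hold. Only $k=0$ uses the criterion. There the left side equals exactly $S_m+aS_{m-2}^2$, after using the recurrence at index $m$ together with $a^{2^{m-1}}=a^{1-2^{m-1}}$.

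Once repaired, Step 4 by itself proves that $S_m+aS_{m-2}^2=1$ implies $L$ permutes with the stated inverse, since $g\circ L\equiv x$ forces injectivity. Step 3, together with the $\gf_2$-rationality of the trace, is then needed only for the converse.
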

Analogous to Lemma \ref{lelinearizedsolution}, we have the following lemma. We omit the details here.
\begin{lemma}\label{lelinearizedsolution421}
Using the notation established in Lemma \ref{lelinearized421}. If $L(x)$ is a permutation polynomial over $\gf_{2^m}$, then for any $C \in \gf_{2^m}$ the unique solution to $L(x) = C$ is given by $$x=\sum_{i=0}^{m-1}\left(S_{m-2-i}^{2^{i+1}}+a^{1-2^{i+1}}S_i\right)C^{2^i}.$$
	\end{lemma}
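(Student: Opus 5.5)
The statement to prove is Lemma \ref{lelinearizedsolution421}. It follows the same pattern as Lemma \ref{lelinearizedsolution}: solving $L(x)=C$ amounts to evaluating the compositional inverse of $L$ at $C$.

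First, since $L(x)=x^4+bx^2+ax$ is a permutation polynomial over $\gf_{2^m}$ by hypothesis, every $C\in\gf_{2^m}$ has exactly one preimage $x_0$ under $L$. So the equation $L(x)=C$ has the unique solution $x_0=L^{-1}(C)$.

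Second, Lemma \ref{lelinearized421} applies because $a,b\in\gf_{2^m}^*$, $m>1$, and $L$ permutes $\gf_{2^m}$. It gives the compositional inverse explicitly as
$$L^{-1}(x)=\sum_{i=0}^{m-1}\left(S_{m-2-i}^{2^{i+1}}+a^{1-2^{i+1}}S_i\right)x^{2^i}.$$
Substituting $x=C$ gives the stated formula for $x_0$.

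The only point needing care is that $L^{-1}$ is defined only modulo $x^{2^m}-x$. This causes no trouble, since we evaluate it at a field element $C\in\gf_{2^m}$, where $L(L^{-1}(C))=C$ holds as an identity of values. Evaluating $L$ at $L^{-1}(C)$ then confirms directly that this value is a solution, and uniqueness was established in the first step. I expect no real obstacle: the argument is a direct restatement of Lemma \ref{lelinearized421}, just as Lemma \ref{lelinearizedsolution} restates Lemma \ref{binomial}.
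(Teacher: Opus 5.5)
Your proposal is correct and matches the paper's argument, which the paper omits as ``analogous to Lemma~\ref{lelinearizedsolution}'': since $L$ permutes $\gf_{2^m}$, the unique solution of $L(x)=C$ is $L^{-1}(C)$. Evaluating the explicit inverse from Lemma~\ref{lelinearized421} at $C$ then gives the stated formula.
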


\section{The  inverses of the permutation polynomials of the form $x+\gamma \mathrm{Tr}_q^{q^2}(H(x))$ over finite fields}

In this section, we determine the compositional inverses of four classes of permutation polynomials of the form
$$
x + \gamma \Tr_q^{q^2}\big(H(x)\big),
$$
where $\gamma \in \gf_{q^2}$ and $H(x)$ is a specific polynomial over $\gf_{q^2}$.

Feng et al. \cite[Lemma 2]{feng2019further} presented that the polynomial
$$f(x)=x(x^n-d)^{(p-1)/n}$$
is a permutation polynomial over
$\gf_{p^m},$ where  $p$ is an odd prime,  $m, n$
are positive integers with $n \mid (p-1)$ and
$d \in \gf_{p^m}$ satisfies $d^{(p^m-1)/n}\neq1$.  As the compositional inverse of such polynomial $f(x)$ is essential for the subsequent analysis in this section, we begin by deriving it.	If $d=0,$ then $f(x)=x^p,$ and so the compositional inverse of $f(x)$ is trivial.  Hence, we only consider $d\neq0$ while computing the compositional inverse of $f(x).$ We introduce two lemmas at first. For simplicity, let $\sharp T$ denote the cardinality of a finite set $T$.
\begin{lemma} \label{AGW} \cite[Lemma 1.2]{akbary2011constructing} (AGW criterion) Let $A, S$ and $\overline{S}$ be finite sets
	with $\sharp S =\sharp \overline{S}$,
	and let $f(x) : A\longrightarrow A$, $g(x): S\longrightarrow \overline{S}$, $\lambda(x): A\longrightarrow S,$ and $\overline{\lambda}(x):A\longrightarrow \overline{S}$ be maps
	such that $\overline{\lambda}(x)\circ f(x)=g(x)\circ \lambda(x).$
	If both $\lambda(x)$ and $\overline{\lambda}(x)$ are surjective,
	then the following statements are equivalent:\\
	(i) $f(x)$ is bijective (a permutation of $A$); and\\
	(ii) $g(x)$ is bijective from $S$ to $\overline{S}$ and
	$f(x)$ is injective on $\lambda^{-1}(s)$ for each $s \in S.$
\end{lemma}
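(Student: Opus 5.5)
The statement is the AGW criterion (Lemma~\ref{AGW}), and I would prove it directly from the commutative relation $\overline{\lambda}\circ f=g\circ \lambda$. The only facts needed are that a self-map of a finite set is bijective exactly when it is injective (or surjective), and that a surjection between sets of equal finite cardinality is a bijection. I will prove the two implications separately.

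For (i)$\Rightarrow$(ii), assume $f$ is a bijection of $A$.
\begin{itemize}
\item Since $f$ is surjective onto $A$ and $\overline{\lambda}$ is surjective onto $\overline{S}$, the composite $\overline{\lambda}\circ f$ is surjective onto $\overline{S}$.
\item Hence $g\circ\lambda$ is surjective onto $\overline{S}$, so every $\bar s\in\overline{S}$ equals $g(\lambda(a))$ for some $a\in A$. Therefore $g:S\to\overline{S}$ is surjective.
\item Because $\sharp S=\sharp\overline{S}$ is finite, $g$ is bijective.
\item Injectivity of $f$ on each fibre $\lambda^{-1}(s)$ holds trivially, since $f$ is injective on all of $A$.
\end{itemize}

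For (ii)$\Rightarrow$(i), assume $g$ is bijective and $f$ is injective on every fibre $\lambda^{-1}(s)$. Take $a,b\in A$ with $f(a)=f(b)$. Applying $\overline{\lambda}$ and the commutative relation gives
$$g(\lambda(a))=\overline{\lambda}(f(a))=\overline{\lambda}(f(b))=g(\lambda(b)).$$
Since $g$ is injective, $\lambda(a)=\lambda(b)=:s$, so $a$ and $b$ lie in the same fibre $\lambda^{-1}(s)$. The fibrewise injectivity of $f$ then forces $a=b$. Thus $f$ is injective on the finite set $A$, and hence a permutation of $A$. The surjectivity of $\lambda$ is not actually used in this direction; it only guarantees that the fibre condition ranges over all of $S$.

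There is no genuine obstacle here. The one point needing care is in (i)$\Rightarrow$(ii): we need the surjectivity of $\overline{\lambda}$ (not of $\lambda$) to conclude that $g$ is onto, and then the cardinality hypothesis $\sharp S=\sharp\overline{S}$ upgrades this to bijectivity.
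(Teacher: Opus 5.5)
Your proof is correct: both directions are argued properly, and you correctly observe that only the surjectivity of $\overline{\lambda}$, together with $\sharp S=\sharp\overline{S}$, is needed to make $g$ bijective in (i)$\Rightarrow$(ii). The paper gives no proof of this lemma and simply cites Akbary--Ghioca--Wang; your argument is the standard elementary proof, and I expect it matches the one given there.
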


\begin{lemma}\cite{niu2021finding,yuan2024local}\label{lex^rh(x^r)}
	Let $r, s$ be positive integers and $s\mid (q-1).$ Let $f(x)=x^rh(x^s)$ be a permutation polynomial over $\gf_q$ and $g^{-1}(x)$ be a compositional inverse of $g(x)=x^rh(x)^s$ over $\mu_{(q-1)/s}.$ Suppose that  $a$ and $b$ are two positive integers satisfying $as+br=1.$ Then the compositional inverse of $f(x)$ in $\gf_q[x]$ is given by
	$$f^{-1}(x)=g^{-1}(x^s)^ax^bh\left(g^{-1}(x^s)\right)^{-b}.$$
\end{lemma}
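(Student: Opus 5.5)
We prove the formula by verifying that $F(x)=g^{-1}(x^s)^a x^b h\bigl(g^{-1}(x^s)\bigr)^{-b}$ satisfies $F(f(x))=x$ for every $x\in\gf_q$, and we organise the argument through the local method of Lemma \ref{leff-}.

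\textbf{The case $x=0$.} Since $f$ permutes $\gf_q$ and $r\ge 1$, we have $f(0)=0$. Interpreting $g^{-1}(0)$ as $0$, the convention implicit in such formulas, the case $x=0$ is immediate.

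\textbf{Setup for $x\neq 0$.} For $x\in\gf_q^*$ we have $f(x)\neq 0$, so $h(x^s)\neq 0$. Put $\lambda(x)=x^s$, which maps $\gf_q^*$ onto $\mu_{(q-1)/s}$. Then
$$f(x)^s=x^{rs}h(x^s)^s=g(x^s).$$
This is the commutative relation $\lambda\circ f=g\circ\lambda$ on $\gf_q^*$. By the AGW criterion, Lemma \ref{AGW}, $g$ is a bijection of $\mu_{(q-1)/s}$. In particular $g^{-1}$ is well defined, and we obtain the first local identity
$$\psi_1(f(x)):=g^{-1}\bigl(f(x)^s\bigr)=x^s.$$

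\textbf{Recovering $x$.} The second local map is the identity, $\psi_2(y)=y$, giving $\psi_2(f(x))=f(x)=x^r h(x^s)$. Choose $a,b$ with $as+br=1$, which requires $\gcd(r,s)=1$. Using the first identity,
$$(x^s)^a\bigl(x^rh(x^s)\bigr)^b\,h(x^s)^{-b}=x^{as+br}=x.$$
So $x=\Phi(\varphi_1(x),\varphi_2(x))$ with $\Phi(u,v)=u^a v^b h(u)^{-b}$. The inverse power $h(u)^{-b}$ is realised as the polynomial $h(u)^{b(q-2)}$, which is legitimate because $h(x^s)\neq 0$ on $\gf_q^*$.

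\textbf{Conclusion and main obstacle.} Lemma \ref{leff-} now yields
$$f^{-1}(y)=\Phi\bigl(\psi_1(y),\psi_2(y)\bigr)=g^{-1}(y^s)^a\,y^b\,h\bigl(g^{-1}(y^s)\bigr)^{-b}.$$
The only delicate points are two. First, $g$ must be bijective on $\mu_{(q-1)/s}$, which comes from the AGW criterion as above. Second, the exponents $a,b$, which may need reducing modulo $q-1$ to be positive, must be handled uniformly, together with the value at $0$. We treat these by working modulo $x^q-x$ and checking $0$ separately.
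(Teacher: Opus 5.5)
The paper gives no proof of this lemma; it is quoted from Niu et al.\ and Yuan. Your argument is correct and is exactly the local-method derivation used in those sources: take $\psi_1(y)=g^{-1}(y^s)$, so that $\psi_1(f(x))=x^s$, take $\psi_2(y)=y$, and recover $x=(x^s)^a\bigl(x^rh(x^s)\bigr)^b h(x^s)^{-b}$ on $\mathbb{F}_q^*$.

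Two small remarks. First, the hypothesis ``positive integers $a,b$ with $as+br=1$'' cannot hold literally, so your reading is the right one: $a,b$ are integers and exponents are taken modulo $q-1$. The paper itself later applies the lemma with $a=1$ and $b=1-n$. Second, at $x=0$ you do not need any convention on $g^{-1}(0)$. Once $b$ is replaced by its representative in $\{1,\dots,q-1\}$, the factor $y^{b}$ already vanishes at $y=0$.
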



We now study the composition inverse of $f(x)=x(x^n-d)^{(p-1)/n}$ in \cite[Lemma 2]{feng2019further}.
\begin{lemma}\label{len(p-1)/n-}
	Let $q=p^m$, where $p$ is an odd prime and $m$
	is a positive integer. Let  $ n$
	be a positive integer with $n \mid (p-1).$
	For any
	$d \in \gf_{q}^*$ with $d^{(q-1)/n}\neq1$,  let
	$f(x)=x(x^n-d)^{(p-1)/n}$
	be a polynomial over
	$\gf_{q}.$  Then the compositional inverse of $f(x)$ is
	{\scriptsize \begin{align*}
			f^{-1}(x)=&\,
			x^{q-n} \left(1-\N_p^q\left(d(x+d)^{n(q-2)}\right)\right)\Big[\N_p^{q}\left(d(x+d)^{n(q-2)}\right)\\
			&\
			\sum_{i=0}^{m-1}\bigl(d^{q-2}(x+d)^n\bigr)^{\frac{p^{i+1}-1}{p-1}}(x+d)^{n(q-p^{i}-1)}\Big]^{q-2}\\
			&\,
			\Bigg[\left(1-\N_p^q\left(d(x+d)^{n(q-2)}\right)\right)\Big[\N_p^{q}\left(d(x+d)^{n(q-2)}\right)\\
			&\,
			\sum_{i=0}^{m-1}\bigl(d^{q-2}(x+d)^n\bigr)^{\frac{p^{i+1}-1}{p-1}}(x+d)^{n(q-p^{i}-1)}\Big]^{q-2}-d\Bigg]^{\frac{(p-1)(n-1)}{n}}.
	\end{align*}}
	Moreover, for any $a \in \gf_{p^m},$ the unique solution of $f(x)=a$ is
	{\scriptsize \begin{align*}
			x=&\,
			a^{q-n} \left(1-\N_p^q\left(d(a+d)^{n(q-2)}\right)\right)\Big[(\N_p^{q}\left(d(a+d)^{n(q-2)}\right)\\
			&\,
			\sum_{i=0}^{m-1}\bigl(d^{q-2}(a+d)^n\bigr)^{\frac{p^{i+1}-1}{p-1}}(a+d)^{n(q-p^{i}-1)}\Big]^{q-2}\\
			&\,
			\Bigg[\left(1-\N_p^q\left(d(a+d)^{n(q-2)}\right)\right)\Big[\N_p^{q}\left(d(a+d)^{n(q-2)}\right)\\
			&\,
			\sum_{i=0}^{m-1}\bigl(d^{q-2}(a+d)^n\bigr)^{\frac{p^{i+1}-1}{p-1}}(a+d)^{n(q-p^{i}-1)}\Big]^{q-2}-d\Bigg]^{\frac{(p-1)(n-1)}{n}}. \end{align*}}
\end{lemma}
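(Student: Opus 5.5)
The plan is to apply Lemma \ref{lex^rh(x^r)} with $r=1$, $s=n$ and $h(x)=(x-d)^{(p-1)/n}$, so that $f(x)=x\,h(x^n)$ and $g(x)=x\,h(x)^n=x(x-d)^{p-1}$. For the B\'ezout relation $as+br=1$ I will take the exponent pair $(1,\,1-n)$, reduced modulo $q-1$ to $(1,\,q-n)$. This already accounts for two visible features of the claimed formula. First, the factor $x^{q-n}$ is $x^b$. Second, the factor $h(g^{-1}(x^n))^{-b}=(g^{-1}(x^n)-d)^{\frac{(p-1)(n-1)}{n}}$ comes from $-b\equiv n-1 \pmod{q-1}$. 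The real work is therefore to compute $g^{-1}(c)$ for $c=x^n$ explicitly, that is, to solve $w(w-d)^{p-1}=c$. Since $f$ is a permutation by \cite[Lemma 2]{feng2019further}, Lemma \ref{AGW} (applied to the diagram with $\lambda(x)=\overline\lambda(x)=x^n$) guarantees that $g$ is bijective on the relevant image set. So the solution I construct is the unique one, and I do not need to reprove bijectivity.

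To solve $w(w-d)^{p-1}=c$, I first multiply both sides by $(w-d)$, which turns the equation into $(w-d)^p+d(w-d)^{p-1}=c$, or equivalently $w^p-d\,w^{p-1}=c$. I then pass to reciprocals, which are written as $(q-2)$-th powers to keep everything polynomial. The goal is to reach an affine $p$-polynomial equation of the form $z^p-Az-B=0$ in a new variable $z$ that is a reciprocal of a shifted $w$. Its coefficients will involve $c+d$ and $d$, matching the appearance of $d^{q-2}(x+d)^n$ and $(x+d)^{n(q-2)}$ in the statement, with $A$ essentially $d^{q-2}(c+d)$ or its inverse. Lemma \ref{lelinearizedsolution} with $r=1$ and $m$ in place of $m$ (so $\gcd(m,1)=1$ and the norm is $\N_p^q$) then gives $z$ as
$\frac{\N(A)}{1-\N(A)}\sum_{i=0}^{m-1}A^{-\frac{p^{i+1}-1}{p-1}}B^{p^i}$. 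Inverting once more, via the $(q-2)$-th power, yields $g^{-1}(c)$ in the bracketed form of the statement. Substituting $c=x^n$ and assembling according to Lemma \ref{lex^rh(x^r)} gives $f^{-1}(x)$. The ``moreover'' part is then immediate by evaluating at $x=a$.

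I expect the main obstacle to be the degenerate cases that the uniqueness hypothesis of Lemma \ref{lelinearizedsolution} glosses over. These are $B=0$ or $A$ with $\N(A)=1$, and inputs where $c+d=0$ or $w=d$ makes the reciprocal substitution invalid. I will handle them as follows. The input $x=0$ must map to $0$; this is guaranteed by the prefactor $x^{q-n}$, since $q-n>0$. For the remaining values, I will argue that the unique solvability of $g(w)=c$ forces the linearized map $z\mapsto z^p-Az$ to have trivial kernel exactly when needed, hence $\N(A)\neq1$. I will also check that $c=-d$ cannot occur for $c=x^n$ with $x\neq0$, because $d^{(q-1)/n}\neq1$ while $(-d)$ would then be an $n$-th power up to the sign, which I will compare using $n\mid p-1$. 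If that sign argument fails, I will instead verify those boundary inputs directly. Finally, I will convert the resulting formula into a single polynomial by replacing every inverse $t^{-1}$ with $t^{q-2}$, which is harmless at $t=0$ provided the degenerate inputs are handled as above.
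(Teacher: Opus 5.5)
Your route is the paper's route: Lemma~\ref{lex^rh(x^r)} with $r=1$, $s=n$, $h(x)=(x-d)^{(p-1)/n}$, then a reciprocal substitution giving a $p$-linear equation, then Lemma~\ref{lelinearizedsolution}. The gap is the step where you expect the coefficients to ``involve $c+d$'' so as to reproduce the $(x+d)^n$ of the statement. Carried out honestly, they do not. Put $u=w-d$. Then $w(w-d)^{p-1}=c$ reads $u^p+du^{p-1}=c$ (your ``equivalently $w^p-dw^{p-1}=c$'' is a slip), and $z=u^{-1}$ gives $z^p-dc^{q-2}z-c^{q-2}=0$. So $A=dc^{q-2}$ and $B=c^{q-2}$ with $c=x^n$. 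No $c+d$ occurs, and in any case $(x+d)^n\neq x^n+d$ for $n\ge 2$. Moreover $g^{-1}(c)=d+z^{q-2}$, so the bracket in the statement corresponds to $g^{-1}(c)-d$, not $g^{-1}(c)$. Let
\[
G(X)=\bigl(1-\N_p^q(dX^{q-2})\bigr)\Bigl[\N_p^q(dX^{q-2})\sum_{i=0}^{m-1}\bigl(d^{q-2}X\bigr)^{\frac{p^{i+1}-1}{p-1}}X^{q-p^{i}-1}\Bigr]^{q-2}
\]
be the statement's bracket with $(x+d)^n$ replaced by $X$; it inverts $u\mapsto u^p+du^{p-1}$ from $\{x^n-d:x\neq0\}$ onto $\{x^n:x\neq0\}$. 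What your argument actually yields is
\[
f^{-1}(x)=x^{q-n}\bigl(G(x^n)+d\bigr)\,G(x^n)^{\frac{(p-1)(n-1)}{n}},
\]
whereas the statement asserts $x^{q-n}G\bigl((x+d)^n\bigr)\bigl(G((x+d)^n)-d\bigr)^{\frac{(p-1)(n-1)}{n}}$.

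No treatment of degenerate cases can close this gap, because the displayed formula is false. Take $p=q=3$, $n=2$, $d=2$. The hypotheses hold, since $d^{(q-1)/n}=2\neq1$, and $f(x)=x^3+x$ swaps $1$ and $2$. At $x=1$ we have $x+d=0$, so the bracket vanishes and the stated formula returns $0$. At $x=2$ we have $(x+d)^n=1$ and $G(1)=2=d$, so the last factor vanishes and the formula again returns $0$. The corrected formula above returns $2$ and $1$, as it should. The same example refutes your claim that $x^n=-d$ cannot occur for $x\neq0$, since $1^2=1=-d$.

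In the corrected formula the only degenerate input is $x=0$, which the factor $x^{q-n}$ handles. The condition $\N_p^q(dx^{-n})\neq1$ should be derived directly from the hypothesis on $d$: if $dx^{-n}=t^{p-1}$, then $d=\bigl(xt^{(p-1)/n}\bigr)^n$ would be an $n$-th power. Do not derive it from uniqueness of solutions of $g(w)=c$, which is only known on the image set.

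The paper's own proof contains the same slip. It inverts the shifted map $u\mapsto(u+d)u^{p-1}$, but then evaluates that inverse at $(x+d)^n$ inside Lemma~\ref{lex^rh(x^r)}. Whose map is $w\mapsto w(w-d)^{p-1}$, and it must be inverted at $x^n$.
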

\begin{proof}
	It is  clear that $f(x)=0$ if and only if $x=0.$ Let $A=\gf_{q}^*$, $\lambda(x)=x^n-d,$ $\bar{\lambda}(x)=x^n,$ $S=\lambda(A),$ $\bar{S}=\bar{\lambda}(A),$ and $g(x)=(x+d)x^{p-1}.$
	Building on the result of Feng et al. \cite{feng2019further}, in which $f(x)$ was shown to be a permutation polynomial since $f(x)$, $\lambda(x)$, $\bar{\lambda}(x)$, and $g(x)$ satisfy the AGW criterion (Lemma \ref{AGW}), the compositional inverse of $f(x)$ can be found by first determining the compositional inverse of $g(x)$, as indicated by Lemma \ref{lex^rh(x^r)}.
	
	For any $b \in \bar{\lambda}(A),$ we need to find the unique solution of \begin{equation}\label{eqx^p}
		g(x)=x^p+dx^{p-1}=b,
	\end{equation}
	or equivalently, by putting $x=1/y$ (since $xb\neq0$),
	\begin{equation}\label{eqy^p}
		y^p-\frac{d}{b}y-\frac{1}{b}=0.
	\end{equation}
	Since $n\mid (p-1)$, $b \in \bar{\lambda}(\gf_{p^m}^*)$ and $d$ is not an $n$-th power, the equation $y^p-\frac{d}{b}y=0$  has only the trivial solution over  $\gf_{p^m}$.
	Thus, the linearized polynomial $y^p-\frac{d}{b}y$ is a permutation polynomial over $\gf_{p^m}.$ It follows from Lemma \ref{lelinearizedsolution} that the solution of \eqref{eqy^p} is
	$$y=\frac{\N_p^{q}\left(b^{q-2}d\right)}{1-\N_p^{q}\left(b^{q-2}d\right)}\sum_{i=0}^{m-1}\bigl(bd^{q-2}\bigr)^{\frac{p^{i+1}-1}{p-1}}b^{q-p^{i}-1},$$
	and thus the solution of \eqref{eqx^p} is  $$x=\frac{1-\N_p^{q}\left(b^{q-2}d\right)}{\N_p^{q}\left(b^{q-2}d\right)\sum_{i=0}^{m-1}\bigl(bd^{q-2}\bigr)^{\frac{p^{i+1}-1}{p-1}}b^{q-p^{i}-1}}.$$
	Therefore, the compositional inverse of $g(x)$ is
	$$g^{-1}(x)=\frac{1-\N_p^{q}\left(dx^{q-2}\right)}{\N_p^{q}\left(dx^{q-2}\right)\sum_{i=0}^{m-1}\bigl(d^{q-2}x\bigr)^{\frac{p^{i+1}-1}{p-1}}x^{q-p^{i}-1}}.$$
	By taking $s=n, r=1$ and $h(x)=(x-d)^{(p-1)/n}$ in Lemma \ref{lex^rh(x^r)}, we have $a=1$, $b=1-n$. Consequently, by Lemma \ref{lex^rh(x^r)}, the compositional inverse of
	$f(x)$ over $\gf_{p^m}$ is given by
	$$f^{-1}(x)=x^{q-n}\cdot g^{-1}((x+d)^n)\cdot h\left(g^{-1}((x+d)^n\right)^{n-1}.$$
	Finally, substituting the expression of $g^{-1}(x)$ and $h(x)$ into the above formula yields the desired result directly. We are done.
	
\end{proof}

 First, we investigate the compositional inverses of four classes of permutation polynomials with the simplified form $$x + \gamma \Tr_q^{q^2}\big(H(x)\big),$$ where $\gamma \in \mathbb{F}_{q^2}$ and $H(x)$ denotes a specific polynomial defined over $\mathbb{F}_{q^2}$. For any arbitrary element $c\in \mathbb{F}_{q^2}$, we derive the explicit solution to the functional equation $x + \gamma \Tr_q^{q^2}\big(H(x)\big)=c$, based on which the exact compositional inverses of the targeted permutation polynomials are explicitly presented.

Zha et al. \cite[Theorem 3 ]{zha2019permutation} showed that the polynomial $f(x)=x+\gamma\Tr_{q}^{q^2}(4\gamma x^7+x^{q+3})$ permutes $\gf_{q^2}$, where $q$ is a power of $7$ and $\gamma \in \gf_{q^2}^*$ satisfying $\gamma^{2(q-1)}=1$ and $\gamma^{2(q-1)/3}\neq 1.$ We investigate the compositional inverse of this permutation polynomial in following result.

\begin{theorem}
	Let $q=7^k$ with a positive integer $k$. Let $\gamma \in \gf_{q^2}^*$ with $\gamma^{2(q-1)}=1$ and $\gamma^{2(q-1)/3}\neq 1.$ Let $f(x)=x+\gamma\Tr_{q}^{q^2}(4\gamma x^7+x^{q+3})$ be a polynomial  over $\gf_{q^2}.$
	
	If $\gamma^q = \gamma$,
	then the compositional inverse of $f(x)$ over $\gf_{q^2}$ is
	{\scriptsize   \begin{align*}
			f^{-1}(x)=&\,
			B^{q^2-3} \left(1-\N_7^{q^2}\left(-\gamma^{-1}(B-\gamma^{-1})^{3(q^2-2)}\right)\right)\Big[(\N_7^{q^2}\left(-\gamma^{-1}(B-\gamma^{-1})^{3(q^2-2)}\right)\\
			&\,
			\sum_{i=0}^{2k-1}\bigl(-\gamma(B-\gamma^{-1})^3\bigr)^{\frac{7^{i+1}-1}{6}}(B-\gamma^{-1})^{3(q^2-7^{i}-1)}\Big]^{q^2-2}\\
			&\,
			\Bigg[\left(1-\N_7^{q^2}\left(-\gamma^{-1}(B-\gamma^{-1})^{3(q^2-2)}\right)\right)\Big[\N_7^{q^2}\left(-\gamma^{-1}(B-\gamma^{-1})^{3(q^2-2)}\right)\\
			&\,
			\sum_{i=0}^{2k-1}\left(-\gamma(B-\gamma^{-1})^3\right)^{\frac{7^{i+1}-1}{6}}(B-\gamma^{-1})^{3(q^2-7^{i}-1)}\Big]^{q^2-2}+\gamma^{-1}\Bigg]^{4}+3(x^q-x), \end{align*}}
	where $B=\gamma^{-1}(x^q-x)^4+4\gamma^{-2}(x^q-x)+\gamma^{-2}x.$
	
	If $\gamma^q =-\gamma$, then the compositional inverse of $f(x)$ over $\gf_{q^2}$ is
	{\scriptsize\begin{align*}
			f^{-1}(x)=&\,
			C^{q^2-3} \left(1-\N_7^{q^2}\left(\gamma^{-1}(C+\gamma^{-1})^{3(q^2-2)}\right)\right)\Big[\N_7^{q^2}\left(\gamma^{-1}(C+\gamma^{-1})^{3(q^2-2)}\right)\\
			&\,
			\sum_{i=0}^{2k-1}\bigl(\gamma(C+\gamma^{-1})^3\bigr)^{\frac{7^{i+1}-1}{6}}(C+\gamma^{-1})^{3(q^2-7^{i}-1)}\Big]^{q^2-2}\\
			&\,
			\Bigg[\left(1-\N_7^{q^2}\left(\gamma^{-1}(C+\gamma^{-1})^{3(q^2-2)}\right)\right)\Big[\N_7^{q^2}\left(\gamma^{-1}(C+\gamma^{-1})^{3(q^2-2)}\right)\\
			&\,
			\sum_{i=0}^{2k-1}\bigl(\gamma(C+\gamma^{-1})^3\bigr)^{\frac{7^{i+1}-1}{6}}(C+\gamma^{-1})^{3(q^2-7^{i}-1)}\Big]^{q^2-2}-\gamma^{-1}\Bigg]^{4}+4(x^q+x),
	\end{align*}}
	where $C=-\gamma^{-1}(x^q+x)^4-4\gamma^{-2}(x^q+x)+\gamma^{-2}x.$
\end{theorem}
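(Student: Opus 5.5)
We will use the local method (Lemma \ref{leff-}) with two auxiliary maps. The first isolates a quantity that $f$ leaves essentially unchanged; the second recovers $x$ from $f(x)$ once that quantity is known, by reducing to the polynomial $x(x^n-d)^{(p-1)/n}$ of Lemma \ref{len(p-1)/n-} with $p=7$, $n=3$ over $\gf_{q^2}$. Throughout, write $T(x)=\Tr_q^{q^2}(4\gamma x^7+x^{q+3})\in\gf_q$.

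\emph{Case $\gamma^q=\gamma$.} Since $T(x)^q=T(x)$, we get $f(x)^q-f(x)=x^q-x+(\gamma^q-\gamma)T(x)=x^q-x$. So $\psi_1(x)=x^q-x$ satisfies $\psi_1\circ f=\varphi_1$ with $\varphi_1(x)=x^q-x$. Put $u=x^q-x$, so $x^q=x+u$ and $u^q=-u$. Then we expand $T$ in terms of $x$ and $u$:
\begin{align*}
\Tr_q^{q^2}(4\gamma x^7)&=4\gamma\bigl(2x^7+u^7\bigr),\\
x^{q+3}+x^{3q+1}&=x^3(x+u)+x(x+u)^3=2x^4+4ux^3+3u^2x^2+u^3x .
\end{align*}
Hence $f(x)=x+\gamma T(x)$ is a polynomial in $x$ of degree $7$ whose coefficients depend only on $\gamma$ and $u$. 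Next we complete the "$7$-th power", i.e. shift $x=y+3u$ (equivalently $y=x+4u$), and multiply by $\gamma^{-2}$. The target identity is
\[
y\,(y^3+\gamma^{-1})^2=\gamma^{-1}u^4+4\gamma^{-2}u+\gamma^{-2}f(x)=:B .
\]
This is $y(y^3-d)^{2}$ with $d=-\gamma^{-1}$ and $(p-1)/n=2$, and it has to be checked in characteristic $7$. Matching it coefficient by coefficient (the $y^7$, $y^4$, $y$ and constant terms, using $u^7=u^{7}$ and $u^q=-u$ where needed) is routine but is the step where the exact constants $3$, $4$, $\gamma^{-1}$, $4\gamma^{-2}$ are pinned down. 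I expect this bookkeeping to be the main obstacle.

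We then verify the hypothesis of Lemma \ref{len(p-1)/n-}. Since $q=7^k\equiv1\pmod 3$, we have $(q^2-1)/3=(q+1)(q-1)/3$ and this exponent is even. So $d^{(q^2-1)/3}=\gamma^{-(q+1)(q-1)/3}$. Here $\omega=\gamma^{(q-1)/3}$ is a cube root of unity lying in $\gf_q$, so $\omega^{q+1}=\omega^2$. This gives $d^{(q^2-1)/3}=\gamma^{-2(q-1)/3}\neq1$ by assumption. Hence $g(y)=y(y^3+\gamma^{-1})^2$ permutes $\gf_{q^2}$, and its inverse is the explicit expression of Lemma \ref{len(p-1)/n-}, with $q$ replaced by $q^2$, $m$ by $2k$, and $d$ by $-\gamma^{-1}$. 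This yields $y=g^{-1}(B)$ and $x=g^{-1}(B)+3u$. So $x=F(\varphi_1(x),\varphi_2(x))$, where $\varphi_2(x)=B(u,f(x))$ and $\psi_2(x)=\gamma^{-1}(x^q-x)^4+4\gamma^{-2}(x^q-x)+\gamma^{-2}x$. Lemma \ref{leff-} then gives $f^{-1}(x)=g^{-1}(B)+3(x^q-x)$, which is the stated formula once $g^{-1}$ is written out.

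\emph{Case $\gamma^q=-\gamma$.} The argument is identical with $\psi_1(x)=x^q+x$, since now $f(x)^q+f(x)=x^q+x+(\gamma^q+\gamma)T(x)=x^q+x$. Put $v=x^q+x$, so $x^q=v-x$. Re-expanding $T$ and shifting $y=x-4v$ should give $y(y^3-\gamma^{-1})^2=C$ with $d=\gamma^{-1}$ and $C=-\gamma^{-1}v^4-4\gamma^{-2}v+\gamma^{-2}f(x)$. For the nonresidue condition, now $\gamma^2\in\gf_q$, and the same computation applied to $\gamma^{2}$ gives $\gamma^{(q^2-1)/3}\neq1$. Lemmas \ref{len(p-1)/n-} and \ref{leff-} then give $f^{-1}(x)=g^{-1}(C)+4(x^q+x)$.
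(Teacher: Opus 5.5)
Your reduction is the paper's proof step for step: the same invariant ($x^q-x$, resp. $x^q+x$), the same shift, the same non-cube check, and the same appeal to Lemma~\ref{leff-}. The characteristic-$7$ bookkeeping you left open does work out. With $y=x+4u$ one gets $y^7+2\gamma^{-1}y^4+\gamma^{-2}y=\gamma^{-2}f(x)+\gamma^{-1}u^4+4\gamma^{-2}u$. With $z=x+3v$ one gets $z^7-2\gamma^{-1}z^4+\gamma^{-2}z=\gamma^{-2}f(x)-\gamma^{-1}v^4+3\gamma^{-2}v$, and $3=-4$ in characteristic $7$. So you have correctly shown $f^{-1}(x)=g^{-1}(B)+3(x^q-x)$ with $g(y)=y(y^3+\gamma^{-1})^2$, and the analogue in the second case.

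The gap is your last step, ``which is the stated formula once $g^{-1}$ is written out''. The closed form in Lemma~\ref{len(p-1)/n-} does not invert $x(x^n-d)^{(p-1)/n}$, so the displayed expression is not $g^{-1}(B)+3(x^q-x)$. In fact the statement as written is false. Take $\gamma^q=\gamma$ and $x=\gamma$. Then $x^q-x=0$ and $B=\gamma^{-1}$, so every occurrence of $B-\gamma^{-1}$ is $0$. The norm term and all summands vanish, the factor in front of the last bracket is $0$, and the formula returns $f^{-1}(\gamma)=0$, whereas $f(0)=0\neq\gamma$. Similarly, $x=-\gamma$ gives $C=-\gamma^{-1}$ and output $0$ when $\gamma^q=-\gamma$. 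The paper's proof has exactly the same defect, since it quotes the same lemma.

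The error is in the last line of the proof of Lemma~\ref{len(p-1)/n-}. Lemma~\ref{lex^rh(x^r)} needs the inverse of $xh(x)^n=x(x-d)^{p-1}=g_0(x-d)$, where $g_0(x)=x^p+dx^{p-1}$. That inverse is $g_0^{-1}(x)+d$, evaluated at $x^n$, not $g_0^{-1}$ evaluated at $(x+d)^n$. The correct assembly is $f^{-1}(x)=x\,g_0^{-1}(x^n)^{q-1-(p-1)/n}$.

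For this theorem, put $s=y^3+\gamma^{-1}$, which is never $0$ since $-\gamma^{-1}$ is a non-cube. Then $ys^2=B$ forces $s^7-\gamma^{-1}s^6=B^3$, hence
\[
f^{-1}(x)=B\,s^{q^2-3}+3(x^q-x).
\]
Here, for $B\neq0$, $s$ is the unique root of $s^7-\gamma^{-1}s^6=B^3$; it is given explicitly by the $g_0^{-1}$ formula in the proof of Lemma~\ref{len(p-1)/n-} with $d=-\gamma^{-1}$, or by Lemma~\ref{lelinearizedsolution} after substituting $s=1/t$. For $B=0$ the first term is $0$. Equivalently, in the displayed formula the quantity $\ell^{-1}\bigl((B-\gamma^{-1})^3\bigr)$, where $\ell(s)=s^7-\gamma^{-1}s^6$, must be replaced by $\ell^{-1}(B^3)-\gamma^{-1}$. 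The second case is analogous, with $s^7+\gamma^{-1}s^6=C^3$ and $f^{-1}(x)=C\,s^{q^2-3}+4(x^q+x)$.

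To complete your argument you must either prove this corrected closed form or check directly that whatever expression you write for $g^{-1}$ really inverts $g$. The lemma's expression fails that check.
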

\begin{proof} By the assumption that $\gamma^{2(q-1)}=1$, we have
	$\gamma^q=\gamma$ or $\gamma^q=-\gamma$. To determine the compositional inverse of $f(x)$,  we need to solve the unique solution to the following equation for any $c \in \gf_{q^2}$:
	\begin{equation}\label{eqzha2019ffathm3c}
		f(x)=x+\gamma\Tr_{q}^{q^2}(4\gamma x^7+x^{q+3})=c.\end{equation}
	Taking the $q$-th powers of the both sides of \eqref{eqzha2019ffathm3c} gives
	\begin{equation}\label{eqzha2019ffathm3c^q}
		x^q + \gamma^q \text{Tr}_q^{q^2} \left( 4\gamma x^7 + x^{q+3} \right) = c^q.
	\end{equation}		
	If $\gamma^q = \gamma$, let $\theta = c^q - c$, from \eqref{eqzha2019ffathm3c} and \eqref{eqzha2019ffathm3c^q} we get
	$$x^q = x + \theta.$$
	Substituting it into \eqref{eqzha2019ffathm3c} leads to
	
	$$x + \gamma \left( 4\gamma x^7 + 4\gamma(x^7 + \theta^7) + (x + \theta)x^3 + x(x + \theta)^3 \right) = c,$$
	or equivalently,
	\begin{equation}\label{eqthmzhathm3y}
		y(y^3+\gamma^{-1})^2=\gamma^{-1}\theta^4+4\gamma^{-2}\theta+\gamma^{-2}c,
	\end{equation}
	where $x=y+3\theta.$ 	
	Zha et al. \cite[Lemma 1]{zha2019permutation} showed that  $y(y^3+\gamma^{-1})^2$ permutes $\gf_{q^2}.$
	Take $n=3$ and $d=-\gamma^{-1}$ in Lemma  \ref{len(p-1)/n-}. Since $q$ is a power of 7 (so $3\mid(7-1)$) and  $\gamma^{2(q-1)/3}\neq 1$, we obtain
	$$\left(-\gamma^{-1}\right)^{(q^2-1)/3}=\left((-\gamma^{-1})^{q+1}\right)^{(q-1)/3}=\left((-\gamma^{-1})^q(-\gamma^{-1})\right)^{(q-1)/3}=\gamma^{-2(q-2)/3}\neq1.$$
	Therefore, it follows from Lemma  \ref{len(p-1)/n-} that the unique solution of \eqref{eqthmzhathm3y} is 
	{\scriptsize\begin{align*}
			y=&\,
			a^{q^2-3} \left(1-\N_7^{q^2}\left(-\gamma^{-1}(a-\gamma^{-1})^{3(q^2-2)}\right)\right)\Big[\N_7^{q^2}\left(-\gamma^{-1}(a-\gamma^{-1})^{3(q^2-2)}\right)\\
			&\,
			\sum_{i=0}^{2k-1}\bigl(-\gamma(a-\gamma^{-1})^3\bigr)^{\frac{7^{i+1}-1}{6}}(a-\gamma^{-1})^{3(q^2-7^{i}-1)}\Big]^{q^2-2}\\
			&\,
			\Bigg[\left(1-\N_7^{q^2}\left(-\gamma^{-1}(a-\gamma^{-1})^{3(q^2-2)}\right)\right)\Big[\N_7^{q^2}\left(-\gamma^{-1}(a-\gamma^{-1})^{3(q^2-2)}\right)\\
			&\,
			\sum_{i=0}^{2k-1}\bigl(-\gamma(a-\gamma^{-1})^3\bigr)^{\frac{7^{i+1}-1}{6}}(a-\gamma^{-1})^{3(q^2-7^{i}-1)}\Big]^{q^2-2}+\gamma^{-1}\Bigg]^{4}, \end{align*}}
	where $a=\gamma^{-1}\theta^4+4\gamma^{-2}\theta+\gamma^{-2}c.$
	Hence, if $\gamma^q = \gamma$, then according to Lemma \ref{leff-}, the compositional inverse of $f(x)$ over $\gf_{q^2}$ is
	{\scriptsize\begin{align*}
			f^{-1}(x)=&\,
			B^{q^2-3} \left(1-\N_7^{q^2}\left(-\gamma^{-1}(B-\gamma^{-1})^{3(q^2-2)}\right)\right)\Big[\N_7^{q^2}\left(-\gamma^{-1}(B-\gamma^{-1})^{3(q^2-2)}\right)\\
			&\,
			\sum_{i=0}^{2k-1}\bigl(-\gamma(B-\gamma^{-1})^3\bigr)^{\frac{7^{i+1}-1}{6}}(B-\gamma^{-1})^{3(q^2-7^{i}-1)}\Big]^{q^2-2}\\
			&\,
			\Bigg[\left(1-\N_7^{q^2}\left(-\gamma^{-1}(B-\gamma^{-1})^{3(q^2-2)}\right)\right)\Big[\N_7^{q^2}\left(-\gamma^{-1}(B-\gamma^{-1})^{3(q^2-2)}\right)\\
			&\,
			\sum_{i=0}^{2k-1}\left(-\gamma(B-\gamma^{-1})^3\right)^{\frac{7^{i+1}-1}{6}}(B-\gamma^{-1})^{3(q^2-7^{i}-1)}\Big]^{q^2-2}+\gamma^{-1}\Bigg]^{4}+3(x^q-x), \end{align*}}
	where $B=\gamma^{-1}(x^q-x)^4+4\gamma^{-2}(x^q-x)+\gamma^{-2}x.$

	If $\gamma^q = -\gamma$, let $\beta = c^q + c$, from \eqref{eqzha2019ffathm3c} and \eqref{eqzha2019ffathm3c^q} we get
	$$x^q = -x + \beta.$$
	Substituting it into \eqref{eqzha2019ffathm3c} leads to
	$$x + \gamma(4\gamma x^7 + 4\gamma(x - \beta)^7 - (x - \beta)x^3 - x(x - \beta)^3) = c,$$
	or equivalently,
	\begin{equation}\label{eqthmzhathm3z}
		z(z^3-\gamma^{-1})^2=-\gamma^{-1}\beta^4-4\gamma^{-2}\beta+\gamma^{-2}c,
	\end{equation}
	where $x=z+4\beta.$
	Zha et al. \cite{zha2019permutation} showed that  $	z(z^3-\gamma^{-1})^2$ permutes $\gf_{q^2}.$
	Take $n=3$ and $d=\gamma^{-1}$ in Lemma \ref{len(p-1)/n-}. It is not hard to verify that
	$$\left(\gamma^{-1}\right)^{(q^2-1)/3}=\left((\gamma^{-1})^{q+1}\right)^{(q-1)/3}=\left(\gamma^{-q}\gamma^{-1}\right)^{(q-1)/3}=\gamma^{-2(q-2)/3}\neq1.$$
	Therefore, it follows from Lemma  \ref{len(p-1)/n-} that the unique solution of \eqref{eqthmzhathm3z} is
	{\scriptsize\begin{align*}
			z=&\,
			b^{q^2-3} \left(1-\N_7^{q^2}\left(\gamma^{-1}(b+\gamma^{-1})^{3(q^2-2)}\right)\right)\Big[\N_7^{q^2}\left(\gamma^{-1}(b+\gamma^{-1})^{3(q^2-2)}\right)\\
			&\,
			\sum_{i=0}^{2k-1}\bigl(\gamma(b+\gamma^{-1})^3\bigr)^{\frac{7^{i+1}-1}{6}}(b+\gamma^{-1})^{3(q^2-7^{i}-1)}\Big]^{q^2-2}\\
			&\,
			\Bigg[\left(1-\N_7^{q^2}\left(\gamma^{-1}(b+\gamma^{-1})^{3(q^2-2)}\right)\right)\Big[\N_7^{q^2}\left(\gamma^{-1}(b+\gamma^{-1})^{3(q^2-2)}\right)\\
			&\,
			\sum_{i=0}^{2k-1}\bigl(\gamma(b+\gamma^{-1})^3\bigr)^{\frac{7^{i+1}-1}{6}}(b+\gamma^{-1})^{3(q^2-7^{i}-1)}\Big]^{q^2-2}-\gamma^{-1}\Bigg]^{4}, \end{align*}}
	where $b=-\gamma^{-1}\beta^4-4\gamma^{-2}\beta+\gamma^{-2}c.$
	Hence, if $\gamma^q =-\gamma$, then according to Lemma \ref{leff-}, the compositional inverse of $f(x)$ over $\gf_{q^2}$ is
	{\scriptsize\begin{align*}
			f^{-1}(x)=&\,
			C^{q^2-3} \left(1-\N_7^{q^2}\left(\gamma^{-1}(C+\gamma^{-1})^{3(q^2-2)}\right)\right)\Big[\N_7^{q^2}\left(\gamma^{-1}(C+\gamma^{-1})^{3(q^2-2)}\right)\\
			&\,
			\sum_{i=0}^{2k-1}\bigl(\gamma(C+\gamma^{-1})^3\bigr)^{\frac{7^{i+1}-1}{6}}(C+\gamma^{-1})^{3(q^2-7^{i}-1)}\Big]^{q^2-2}\\
			&\,
			\Bigg[\left(1-\N_7^{q^2}\left(\gamma^{-1}(C+\gamma^{-1})^{3(q^2-2)}\right)\right)\Big[\N_7^{q^2}\left(\gamma^{-1}(C+\gamma^{-1})^{3(q^2-2)}\right)\\
			&\,
			\sum_{i=0}^{2k-1}\bigl(\gamma(C+\gamma^{-1})^3\bigr)^{\frac{7^{i+1}-1}{6}}(C+\gamma^{-1})^{3(q^2-7^{i}-1)}\Big]^{q^2-2}-\gamma^{-1}\Bigg]^{4}+4(x^q+x), \end{align*}}
	where $C=-\gamma^{-1}(x^q+x)^4-4\gamma^{-2}(x^q+x)+\gamma^{-2}x.$
	We are done.
\end{proof}
Zha et al. \cite[Theorem 5 ]{zha2019permutation} showed that the polynomial $f(x)$ over $\mathbb{F}_{q^2}$ ($q$ is a power of 7) of the form
$f(x) = x + \gamma \operatorname{Tr}_q^{q^2} \left( x^7 + \alpha (4x^5 - x^{q+4} - 2x^{2q+3}) + \beta (2x^3 - x^{q+2}) \right)$  is a permutation, where $ \alpha, \beta, \gamma \in \mathbb{F}_q^* $ satisfy $ \alpha^{(q-1)/2}=-1$, $ \beta = -2\alpha^2 $ and $ \gamma = 3\alpha^{-3} $. We investigate the compositional inverse of this permutation polynomial in following result.

\begin{theorem}
	Let $q=7^k$ and $ \alpha, \beta, \gamma \in \mathbb{F}_q^* $ satisfy $ \alpha^{(q-1)/2} = -1 $, $ \beta = -2\alpha^2 $ and $ \gamma = 3\alpha^{-3} $. Let
	$f(x) = x + \gamma \operatorname{Tr}_q^{q^2} \left( x^7 + \alpha (4x^5 - x^{q+4} - 2x^{2q+3}) + \beta (2x^3 - x^{q+2}) \right)$
	be a  polynomial  over $ \mathbb{F}_{q^2} $. Then
	the compositional inverse of $f(x)$ over $\gf_{q^2}$ is {\scriptsize	 \begin{align*}
			f^{-1}(x)=&\,
			B^{q-2} \left(1-\N_7^q\left(2\alpha(B+2\alpha)^{2(q-2)}\right)\right)\Big[\N_7^{q}\left(2\alpha(B+2\alpha)^{2(q-2)}\right)\\
			&\,
			\sum_{i=0}^{k-1}\bigl(4\alpha^{-1}(B+2\alpha)^2\bigr)^{\frac{7^{i+1}-1}{6}}(B+2\alpha)^{2(q-7^{i}-1)}\Big]^{q-2}\\
			&\,
			\Bigg[\left(1-\N_7^q\left(2\alpha(B+2\alpha)^{2(q-2)}\right)\right)\Big[\N_7^{q}\left(2\alpha(B+2\alpha)^{2(q-2)}\right)\\
			&\,
			\sum_{i=0}^{k-1}\bigl(4\alpha^{-1}(B+2\alpha)^2\bigr)^{\frac{7^{i+1}-1}{6}}(B+2\alpha)^{2(q-7^{i}-1)}\Big]^{q-2}-2\alpha\Bigg]^{3}+3(x^q-x), \end{align*}}	
	where $B=	2\gamma^{-1}(x^q+x).$
\end{theorem}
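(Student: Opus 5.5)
The plan follows the proof of the preceding theorem: solve $f(x)=c$ explicitly for every $c\in\gf_{q^2}$, then read off $f^{-1}$ with the local method (Lemma \ref{leff-}). The new feature is that the unknown reduces to an element of the subfield $\gf_q$. This is why the formula involves only $\N_7^q$, exponents $q-2$ and a sum over $i\le k-1$.

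\textbf{Step 1 (splitting off a subfield variable).} Since $\gamma\in\gf_q^*$, raise $f(x)=c$ to the $q$-th power and subtract. The trace term cancels, leaving $x^q=x+\theta$ with $\theta=c^q-c$, so $\theta^q=-\theta$. Put $x=y+3\theta$. In characteristic $7$ we get $y^q=x^q-3\theta^q=x+\theta+3\theta=y+7\theta=y$, so $y\in\gf_q$. Moreover $x+x^q=(y+3\theta)+(y-3\theta)=2y$.

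\textbf{Step 2 (the reduced equation).} Add $f(x)=c$ to its conjugate. This gives
$$2y+2\gamma\,\Tr_q^{q^2}\big(H(y+3\theta)\big)=c+c^q,$$
where $H(x)=x^7+\alpha(4x^5-x^{q+4}-2x^{2q+3})+\beta(2x^3-x^{q+2})$. To evaluate the trace, expand $H$ at $x=y+3\theta$ and $x^q=y-3\theta$, using $y\in\gf_q$ and $\theta^q=-\theta$; this makes each trace explicit. Then substitute $\beta=-2\alpha^2$ and $\gamma=3\alpha^{-3}$. I expect every $\theta$-dependent term to cancel, because the right-hand side $c+c^q$ is independent of $\theta$. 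After multiplying by $\gamma^{-1}$, the equation should collapse to
$$y(y^2-2\alpha)^3=2\gamma^{-1}(c+c^q)=:b.$$
This bookkeeping is the main obstacle. The mixed terms $x^{q+4}$, $x^{2q+3}$, $x^{q+2}$ must be expanded carefully, and the normalization checked against the shape required by Lemma \ref{len(p-1)/n-}. My first move would be to compute the coefficients of $\theta^j$ for each $j$ separately. If the constant comes out differently, I would adjust the scaling of $b$ accordingly.

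\textbf{Step 3 (solving in $\gf_q$).} Apply Lemma \ref{len(p-1)/n-} over $\gf_q=\gf_{7^k}$ with $p=7$, $m=k$, $n=2$, $d=2\alpha$. The exponent is then $(p-1)/n=3$, and the final exponent is $(p-1)(n-1)/n=3$. The hypothesis holds: $2=3^2$ is a square in $\gf_7$, so $2^{(q-1)/2}=1$, and therefore $d^{(q-1)/2}=\alpha^{(q-1)/2}=-1\neq 1$. Hence $y=\phi(b)$, where $\phi$ is the explicit inverse of Lemma \ref{len(p-1)/n-} with $x$ replaced by $b$. Then $x=\phi(b)+3(c^q-c)$.

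\textbf{Step 4 (the inverse).} Take $\psi_1(x)=2\gamma^{-1}(x^q+x)$ and $\psi_2(x)=x^q-x$. By Steps 1–3, $\psi_1\circ f$ and $\psi_2\circ f$ recover $b$ and $\theta$ from $x$, and $x=\phi(\psi_1\circ f(x))+3\,\psi_2\circ f(x)$. Lemma \ref{leff-} then gives $f^{-1}(x)=\phi(B)+3(x^q-x)$ with $B=2\gamma^{-1}(x^q+x)$, which is the stated formula.
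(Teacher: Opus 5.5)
\paragraph{Comparison with the paper.} Your four steps follow the paper's proof: $x^q=x+\theta$, the shift $x=y+3\theta$ with $y\in\gf_q$, the reduction to $y(y^2-2\alpha)^3=2\gamma^{-1}(c+c^q)$, Lemma \ref{len(p-1)/n-} with $n=2$, $d=2\alpha$, and the local method with $\psi_1=2\gamma^{-1}(x^q+x)$, $\psi_2=x^q-x$.

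Step 2 does go through, but your reason for expecting the $\theta$-terms to cancel is circular, since $c+c^q=f(x)+f(x)^q$ is itself a function of $y$ and $\theta$. The cancellation has to be computed. Writing $x=y+s$, $x^q=y-s$ with $s=3\theta$, one gets $\Tr_q^{q^2}(H(x))=2y^7+\alpha(2y^5+28y^3s^2+14ys^4)+\beta(2y^3+14ys^2)=2(y^7+\alpha y^5+\beta y^3)$ in characteristic $7$. The normalising factor is then $(4\gamma)^{-1}=2\gamma^{-1}$, not $\gamma^{-1}$.

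\paragraph{The gap is in Step 3, and the paper has it too.} The formula of Lemma \ref{len(p-1)/n-} is not the inverse of $x(x^n-d)^{(p-1)/n}$. In its proof, Lemma \ref{lex^rh(x^r)} is applied with $h(x)=(x-d)^{(p-1)/n}$. This needs the inverse of $x\,h(x)^n=x(x-d)^{p-1}$, which is $w\mapsto g^{-1}(w)+d$ with $g(x)=(x+d)x^{p-1}$. The paper instead inserts $g^{-1}\bigl((x+d)^n\bigr)$.

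For the present statement this is fatal. Take $x=2\alpha^{-2}\in\gf_q$. Then $x^q-x=0$ and $B=4\gamma^{-1}x=-\alpha^3\cdot 2\alpha^{-2}=-2\alpha$, so $B+2\alpha=0$. The norm factor vanishes, so the bracket raised to $q-2$ is $0$, and the displayed formula returns $0$. It also returns $0$ at $x=0$, and $f(0)=0$. So the formula is not injective, and the statement is false as written. Concretely, for $q=7$, $\alpha=5$, the map $f$ on $\gf_7$ is $y\mapsto y(y^2-3)^3$, which swaps $1,6$ and $3,4$ and fixes $0,2,5$. The formula gives $f^{-1}(4)=0$ and $f^{-1}(2)=3$. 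Citing the lemma therefore cannot close your argument.

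\paragraph{Repair.} From $b^2=g(y^2-2\alpha)$ with $g(w)=(w+2\alpha)w^6$, set $G=g^{-1}(b^2)+2\alpha$, which equals $y^2$. Then $y=b^{q-2}G(G-2\alpha)^3$, where
$g^{-1}(w)=\bigl(1-\N_7^q(2\alpha w^{q-2})\bigr)\bigl[\N_7^q(2\alpha w^{q-2})\sum_{i=0}^{k-1}(4\alpha^{-1}w)^{\frac{7^{i+1}-1}{6}}w^{q-7^i-1}\bigr]^{q-2}$.
With this $\phi$, your Steps 1, 2 and 4 give the correct inverse $f^{-1}(x)=B^{q-2}G(G-2\alpha)^3+3(x^q-x)$ with $G=g^{-1}(B^2)+2\alpha$.
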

\begin{proof} For any $ c \in \mathbb{F}_{q^2} $, we consider the solution of the equation
	\begin{equation}\label{eqthmzha2019ffathm5c}
		x + \gamma \operatorname{Tr}_q^{q^2}(x^7 + \alpha(4x^5 - x^{q+4} - 2x^{2q+3}) + \beta(2x^3 - x^{q+2})) = c
	\end{equation}
	over $\mathbb{F}_{q^2}$. Let $\theta = c^q - c.$ Then $\theta^q = -\theta$ and
	$$x^q = x + \theta$$
	from \eqref{eqthmzha2019ffathm5c}. Substituting it into \eqref{eqthmzha2019ffathm5c} gives
	$$x + \gamma\left(2x^7 + \theta^7 + 2\alpha(x^5 + 6\theta x^4 + 6\theta^2 x^3 + 3\theta^3 x^2 + 6\theta^4 x + 2\theta^5)\right.$$
	$$\left. + 2\beta(x^3 + 5\theta x^2 + 6\theta^2 x + \theta^3)\right) = c.$$
	Let $ x = y + 3\theta $, we can deduce that $ y \in \mathbb{F}_q $ and
	
	\begin{equation}\label{eqthmzha2019ffathm5y}
		2\gamma^{-1}\theta + 4\gamma^{-1}c = y^7 + \alpha y^5 + \beta y^3 + 4\gamma^{-1}y = y(y^2 - 2\alpha)^3
	\end{equation}
	since $ \alpha, \beta, \gamma \in \mathbb{F}_q^* $, $ \beta = -2\alpha^2 $ and $ \gamma = 3\alpha^{-3} $.
	Zha et al. \cite{zha2019permutation} showed that  $	y(y^2 - 2\alpha)^3$ permutes $\gf_{q}.$
	Setting $n=2$ and $d=2\alpha$ in Lemma \ref{len(p-1)/n-}, it is easy to see that $(2\alpha)^{(q-1)/2}=\alpha^{(q-1)/2}\neq1$ by assumption.
	Therefore, it follows from Lemma  \ref{len(p-1)/n-} that the unique solution of \eqref{eqthmzha2019ffathm5y} is
	{\scriptsize \begin{align*}
			y=&\,
			a^{q-2} \left(1-\N_7^q\left(2\alpha(a+2\alpha)^{2(q-2)}\right)\right)\Big[\N_7^{q}\left(2\alpha(a+2\alpha)^{2(q-2)}\right)\\
			&\,
			\sum_{i=0}^{k-1}\bigl(4\alpha^{-1}(a+2\alpha)^2\bigr)^{\frac{7^{i+1}-1}{6}}(a+2\alpha)^{2(q-7^{i}-1)}\Big]^{q-2}\\
			&\,
			\Bigg[\left(1-\N_7^q\left(2\alpha(a+2\alpha)^{2(q-2)}\right)\right)\Big[\N_7^{q}\left(2\alpha(a+2\alpha)^{2(q-2)}\right)\\
			&\,
			\sum_{i=0}^{k-1}\bigl(4\alpha^{-1}(a+2\alpha)^2\bigr)^{\frac{7^{i+1}-1}{6}}(a+2\alpha)^{2(q-7^{i}-1)}\Big]^{q-2}-2\alpha\Bigg]^{3}, \end{align*}	}	
	where $a=	2\gamma^{-1}\theta + 4\gamma^{-1}c.$
	Hence, the compositional inverse of $f(x)$ over $\gf_{q^2}$ is
	{\scriptsize	 \begin{align*}
			f^{-1}(x)=&\,
			B^{q-2} \left(1-\N_7^q\left(2\alpha(B+2\alpha)^{2(q-2)}\right)\right)\Big[\N_7^{q}\left(2\alpha(B+2\alpha)^{2(q-2)}\right)\\
			&\,
			\sum_{i=0}^{k-1}\bigl(4\alpha^{-1}(B+2\alpha)^2\bigr)^{\frac{7^{i+1}-1}{6}}(B+2\alpha)^{2(q-7^{i}-1)}\Big]^{q-2}\\
			&\,
			\Bigg[\left(1-\N_7^q\left(2\alpha(B+2\alpha)^{2(q-2)}\right)\right)\Big[\N_7^{q}\left(2\alpha(B+2\alpha)^{2(q-2)}\right)\\
			&\,
			\sum_{i=0}^{k-1}\bigl(4\alpha^{-1}(B+2\alpha)^2\bigr)^{\frac{7^{i+1}-1}{6}}(B+2\alpha)^{2(q-7^{i}-1)}\Big]^{q-2}-2\alpha\Bigg]^{3}+3(x^q-x), \end{align*}}	
	%
	where $B=	2\gamma^{-1}(x^q+x).$
	This completes the proof.
\end{proof}

Zha et al. \cite[Theorem 6 ]{zha2019permutation} showed that the polynomial  $f(x)$ over $\mathbb{F}_{q^2}$ ($q$ is a power of 5) of the form
$f(x) = x + \gamma \operatorname{Tr}_q^{q^2} \left( 3\gamma x^5 + 2x^{q+2} - x^3 \right)$ is a permutation, where  $ \gamma \in \mathbb{F}_q^* $ with $ \gamma^{(q-1)/2} = -1 $. We now investigate the compositional inverse of this permutation polynomial in the following.			
\begin{theorem}
	Let $q=5^k$ with a positive integer $k$ and  $ \gamma \in \mathbb{F}_q^* $ with $ \gamma^{(q-1)/2} = -1 $. Let $
	f(x) = x + \gamma \operatorname{Tr}_q^{q^2} \left( 3\gamma x^5 + 2x^{q+2} - x^3 \right)
	$ be a polynomial over $\gf_{q^2}.$ Then the compositional inverse of $f(x)$ over $\gf_{q^2}$ is
	{\scriptsize \begin{align*}
			f^{-1}(x)=&\,
			B^{q-2} \left(1-\N_5^q\left(-\gamma^{-1}(B-\gamma^{-1})^{2(q-2)}\right)\right)\Big[\N_5^{q}\left(-\gamma^{-1}(B-\gamma^{-1})^{2(q-2)}\right)\\
			&\,
			\sum_{i=0}^{k-1}\bigl(-\gamma(B-\gamma^{-1})^2\bigr)^{\frac{5^{i+1}-1}{4}}(B-\gamma^{-1})^{2(q-5^{i}-1)}\Big]^{q-2}\\
			&\,
			\Bigg[\left(1-\N_5^q\left(-\gamma^{-1}(B-\gamma^{-1})^{2(q-2)}\right)\right)\Big[\N_5^{q}\left(-\gamma^{-1}(B-\gamma^{-1})^{2(q-2)}\right)\\
			&\,
			\sum_{i=0}^{k-1}\bigl(-\gamma(B-\gamma^{-1})^2\bigr)^{\frac{5^{i+1}-1}{4}}(B-\gamma^{-1})^{2(q-5^{i}-1)}\Big]^{q-2}+\gamma^{-1}\Bigg]^{2}, \end{align*}}
	where $B=3\gamma^{-2}(x^q+x).$
\end{theorem}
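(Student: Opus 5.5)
The plan is to solve $f(x)=c$ explicitly for an arbitrary $c\in\gf_{q^2}$, reduce it to an equation of the shape $y(y^2-d)^{(p-1)/2}=a$ over $\gf_q$ with $p=5$, invoke Lemma \ref{len(p-1)/n-}, and then read off $f^{-1}$ via Lemma \ref{leff-}, exactly as in the previous two theorems. Since $\gamma\in\gf_q^*$, raising $f(x)=c$ to the $q$-th power and subtracting gives $x^q=x+\theta$ with $\theta=c^q-c$, so $\theta^q=-\theta$. Adding the two equations instead gives $x+x^q+2\gamma T=c+c^q$, where $T=\Tr_q^{q^2}(3\gamma x^5+2x^{q+2}-x^3)\in\gf_q$. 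So $B=3\gamma^{-2}(x^q+x)$ is the natural $\gf_q$-valued quantity to track.

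The next step is to compute $T$ after substituting $x^q=x+\theta$, working in characteristic $5$. One gets
$\Tr(x^5)=2x^5+\theta^5$, $\Tr(x^{q+2})=2x^3+3\theta x^2+\theta^2x$ and $\Tr(x^3)=2x^3+3\theta x^2+3\theta^2x+\theta^3$.
Hence $T=\gamma x^5+3\gamma\theta^5+2x^3+3\theta x^2-\theta^2x-\theta^3$.

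I then shift $x=y+3\theta$, noting that $3\equiv 2^{-1}\pmod 5$. This makes $y=(x+x^q)/2\in\gf_q$, and it should kill the even powers of $\theta$ once everything is rewritten in $y$. The expected result, after multiplying by a suitable power of $\gamma^{-1}$ and using $\theta^2\in\gf_q$, is a relation
\begin{equation*}
y\,(y^2-\gamma^{-1})^2=3\gamma^{-2}(c+c^q),
\end{equation*}
whose right-hand side is the specialization of $B$. The main obstacle is this bookkeeping: I must check that all the $\theta$-dependent terms either cancel or combine into a multiple of $c+c^q$, rather than leaving a stray term in $\theta$. If a stray term survives, I would instead work directly with $s=x+x^q$ and the traced equation $s+2\gamma T=c+c^q$, expressing $T$ as a polynomial in $s$ alone.

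Once the reduced equation is in hand, apply Lemma \ref{len(p-1)/n-} over $\gf_q$ with $p=5$, $m=k$, $n=2$ and $d=\gamma^{-1}$. The exponent $(p-1)(n-1)/n$ equals $2$, matching the outer square in the statement. The hypothesis $d^{(q-1)/2}\neq1$ is exactly $\gamma^{(q-1)/2}=-1$, and $n=2\mid 4=p-1$ holds. This yields $y$ as the displayed expression in $a=3\gamma^{-2}(c+c^q)$.

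Finally, $x$ is recovered from $y$ and $\theta$; if the shift contributes, this gives $x=y+3\theta$. Lemma \ref{leff-} then produces $f^{-1}$, with $\psi_1(x)=3\gamma^{-2}(x^q+x)$ and $\psi_2(x)=x^q-x$, by replacing $c$ with $x$. At this point I would compare with the stated formula, which has no $x^q-x$ term. I would verify on a small case such as $q=5$ whether the $3(x^q-x)$ summand really drops out because of the normalization chosen for $y$, or whether it must be appended as in the previous theorem.
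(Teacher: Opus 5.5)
Your route is the paper's route: solve $f(x)=c$ via $x^q=x+\theta$, pass to $y=(x+x^q)/2\in\gf_q$, reduce to $y(y^2-d)^2=\,$const, apply Lemma \ref{len(p-1)/n-}, then the local method. It cannot prove the statement, because the statement is false as displayed, and it fails exactly at the question you left open. Every ingredient of the displayed right-hand side is built from $B=3\gamma^{-2}\Tr_q^{q^2}(x)\in\gf_q$ and constants in $\gf_q$. So that polynomial maps $\gf_{q^2}$ into $\gf_q$ and cannot invert a permutation of $\gf_{q^2}$; no normalization of $y$ makes the $x^q-x$ term disappear. The paper's proof computes $y$ and then reports $y$ itself as $f^{-1}$. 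The missing summand is $2(x^q-x)$, not $3(x^q-x)$. Since $3=1/2$ in characteristic $5$, $y=(x+x^q)/2=x+3\theta$, hence $x=y+2\theta$. Your ``$x=y+3\theta$'' contradicts your own definition of $y$; the coefficient $3$ in the preceding theorem is $-1/2$ only in characteristic $7$.

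Two further corrections are needed. First, your reduced equation has the wrong sign. Substituting $x=y+2\theta$ into your (correct) expression for $T$ gives $\gamma T=\gamma^2y^5+2\gamma y^3$. Hence $\gamma^2y^5+2\gamma y^3+y=c-2\theta=3(c+c^q)$, i.e.\ $y(y^2+\gamma^{-1})^2=3\gamma^{-2}(c+c^q)$. So $d=-\gamma^{-1}$, which is what the display uses; your $d=\gamma^{-1}$ would produce an expression in $B+\gamma^{-1}$ instead.

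Second, Lemma \ref{len(p-1)/n-} is itself misstated, so even the $\gf_q$-component of the display is wrong. Write $\Phi$ for the inverse of $u\mapsto(u-\gamma^{-1})u^4$. The display then has the shape $B^{q-2}\mathcal W(\mathcal W+\gamma^{-1})^2$ with $\mathcal W=\Phi\bigl((B-\gamma^{-1})^2\bigr)$. In the lemma's proof, Lemma \ref{lex^rh(x^r)} needs the inverse of $t\mapsto t(t-d)^{p-1}$ evaluated at $x^n$. That value is $\Phi(x^n)+d$, not $\Phi((x+d)^n)$. Concretely, take $q=5$ and $\gamma=2$. Then $f(1)=1+2\Tr_5^{25}(2)=4$. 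At $x=4$ one has $B=1$, $x^q-x=0$ and $\mathcal W=\Phi(4)=2$, so the display gives $1\cdot 2\cdot(2+3)^2=0\neq 1$.

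What your argument does establish, once corrected, is
$f^{-1}(x)=B^{q-2}(\mathcal G-\gamma^{-1})\mathcal G^{2}+2(x^q-x)$,
where $\mathcal G=\Phi(B^2)$. This is the paper's bracketed expression with every $B-\gamma^{-1}$ replaced by $B$. In the example above it returns $1\cdot(4-3)\cdot 16=1$, as it should.
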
			
\begin{proof}			
	For any $ c \in \mathbb{F}_{q^2} $, we consider the solution of the equation
	\begin{equation}\label{eqzha2019thm6c}
		x + \gamma \operatorname{Tr}_q^{q^2} \left( 3\gamma x^5 + 2x^{q+2} - x^3 \right) = c
	\end{equation}
	over $ \mathbb{F}_{q^2} $. Since $ \gamma \in \mathbb{F}_q^* $, from \eqref{eqzha2019thm6c} we have $ x^q = x + \theta $, where $ \theta = c^q - c $ and $ \theta^q = -\theta $. Equation \eqref{eqzha2019thm6c} can then be rewritten as
	$$
	x + \gamma \left( 3\gamma x^5 + 3\gamma(x + \theta)^5 + 2(x + \theta)x^2 + 2x(x + \theta)^2 - x^3 - (x + \theta)^3 \right) = c.
	$$
	By some simplification, the above equation turns to
	$$
	\gamma^2 x^5 + 2\gamma(x^3 + 4\theta x^2 + 2\theta^2 x + 2\theta^3) + x + 3\gamma^2 \theta^5 = c.
	$$
	Let $ y = x + 3\theta $. It leads to $ y \in \mathbb{F}_q $ and
	\begin{equation}\label{eqthmzha2019ffathm6y}
		3\gamma^{-2}\theta + \gamma^{-2}c = y^5 + 2\gamma^{-1}y^3 + \gamma^{-2}y = y(y^2 + \gamma^{-1})^2.
	\end{equation}
	Zha et al. \cite{zha2019permutation} showed that  $	 y(y^2 + \gamma^{-1})^2$ permutes $\gf_{q}.$ Take $n=2$ and $d=-\gamma^{-1}$ in Lemma  \ref{len(p-1)/n-}. Then, it follows from Lemma  \ref{len(p-1)/n-} that the unique solution of \eqref{eqthmzha2019ffathm6y} is
	{\scriptsize  \begin{align*}
			y=&\,
			a^{q-2} \left(1-\N_5^q\left(-\gamma^{-1}(a-\gamma^{-1})^{2(q-2)}\right)\right)\Big[\N_5^{q}\left(-\gamma^{-1}(a-\gamma^{-1})^{2(q-2)}\right)\\
			&\,
			\sum_{i=0}^{k-1}\bigl(-\gamma(a-\gamma^{-1})^2\bigr)^{\frac{5^{i+1}-1}{4}}(a-\gamma^{-1})^{2(q-5^{i}-1)}\Big]^{q-2}\\
			&\,
			\Bigg[\left(1-\N_5^q\left(-\gamma^{-1}(a-\gamma^{-1})^{2(q-2)}\right)\right)\Big[\N_5^{q}\left(-\gamma^{-1}(a-\gamma^{-1})^{2(q-2)}\right)\\
			&\,
			\sum_{i=0}^{k-1}\bigl(-\gamma(a-\gamma^{-1})^2\bigr)^{\frac{5^{i+1}-1}{4}}(a-\gamma^{-1})^{2(q-5^{i}-1)}\Big]^{q-2}+\gamma^{-1}\Bigg]^{2}. \end{align*}}
	%
	where $a=	3\gamma^{-2}\theta + \gamma^{-2}c.$
	Hence, the compositional inverse of $f(x)$ over $\gf_{q^2}$ is
	{\scriptsize \begin{align*}
			f^{-1}=&\,
			B^{q-2} \left(1-\N_5^q\left(-\gamma^{-1}(B-\gamma^{-1})^{2(q-2)}\right)\right)\Big[\N_5^{q}\left(-\gamma^{-1}(B-\gamma^{-1})^{2(q-2)}\right)\\
			&\,
			\sum_{i=0}^{k-1}\bigl(-\gamma(B-\gamma^{-1})^2\bigr)^{\frac{5^{i+1}-1}{4}}(B-\gamma^{-1})^{2(q-5^{i}-1)}\Big]^{q-2}\\
			&\,
			\Bigg[\left(1-\N_5^q\left(-\gamma^{-1}(B-\gamma^{-1})^{2(q-2)}\right)\right)\Big[\N_5^{q}\left(-\gamma^{-1}(B-\gamma^{-1})^{2(q-2)}\right)\\
			&\,
			\sum_{i=0}^{k-1}\bigl(-\gamma(B-\gamma^{-1})^2\bigr)^{\frac{5^{i+1}-1}{4}}(B-\gamma^{-1})^{2(q-5^{i}-1)}\Big]^{q-2}+\gamma^{-1}\Bigg]^{2}, \end{align*}}
	where $B=3\gamma^{-2}(x^q+x).$
	This is the desired result.
\end{proof}
Zha et al. \cite[Theorem 7 ]{zha2019permutation} showed that the polynomial $f(x)$ over $\mathbb{F}_{q^2}$ ($q$ is a power of $11$) of the form
$f(x) = x + \gamma \operatorname{Tr}_q^{q^2} \left( 6\gamma x^{11} + 2x^{3q+3} - x^{q+5} \right)$ is a permutation, where $\gamma \in \mathbb{F}_q^* $ satisfies $ \gamma^{(q-1)/5} \neq 1 .$ We now investigate the compositional inverse of this permutation polynomial in following result.	
\begin{theorem}
	Let $q=11^k$ with a positive integer $k$, and $\gamma \in \mathbb{F}_q^* $ with $ \gamma^{(q-1)/5} \neq 1 .$ Let $
	f(x) = x + \gamma \operatorname{Tr}_q^{q^2} \left( 6\gamma x^{11} + 2x^{3q+3} - x^{q+5} \right)
	$ be a polynomial over $\gf_{q^2}.$ Then the compositional inverse of $f(x)$ over $\gf_{q^2}$ is
	{\scriptsize \begin{align*}
			f^{-1}(x)=&\,
			B^{q-5} \left(1-\N_{11}^q\left(-\gamma^{-1}(B-\gamma^{-1})^{5(q-2)}\right)\right)\Big[\N_{11}^{q}\left(-\gamma^{-1}(B-\gamma^{-1})^{5(q-2)}\right)\\
			&\,
			\sum_{i=0}^{k-1}\bigl(-\gamma(B-\gamma^{-1})^5\bigr)^{\frac{11^{i+1}-1}{10}}(B-\gamma^{-1})^{5(q-11^{i}-1)}\Big]^{q-2}\\
			&\,
			\Bigg[\left(1-\N_{11}^q\left(-\gamma^{-1}(B-\gamma^{-1})^{5(q-2)}\right)\right)\Big[\N_{11}^{q}\left(-\gamma^{-1}(B-\gamma^{-1})^{5(q-2)}\right)\\
			&\,
			\sum_{i=0}^{k-1}\bigl(-\gamma(B-\gamma^{-1})^5\bigr)^{\frac{11^{i+1}-1}{10}}(B-\gamma^{-1})^{5(q-11^{i}-1)}\Big]^{q-2}+\gamma^{-1}\Bigg]^{8}, \end{align*}}
	where $B=	\gamma^{-2}(x + 6(x^q-x) - \gamma(x^q-x)^6).$
\end{theorem}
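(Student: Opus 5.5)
We follow the same route as the two previous theorems. Fix $c\in\gf_{q^2}$ and solve
$$
x+\gamma\Tr_q^{q^2}\left(6\gamma x^{11}+2x^{3q+3}-x^{q+5}\right)=c .
$$
Since $\gamma\in\gf_q^*$, raising this equation to the $q$-th power and subtracting gives $x^q=x+\theta$, where $\theta=c^q-c$ and $\theta^q=-\theta$.

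\textbf{Step 1: eliminate $x^q$.} Substitute $x^q=x+\theta$ into each trace term, using $\Tr_q^{q^2}(u)=u+u^q$ and $x^{q^2}=x$ on $\gf_{q^2}$:
\begin{itemize}
\item $6\gamma x^{11}+6\gamma(x+\theta)^{11}=\gamma x^{11}+6\gamma\theta^{11}$ in characteristic $11$;
\item $2x^{3q+3}+\left(2x^{3q+3}\right)^q=4x^3(x+\theta)^3$;
\item $-x^{q+5}-\left(x^{q+5}\right)^q=-(x+\theta)x^5-x(x+\theta)^5$.
\end{itemize}
This turns the original equation into a single polynomial equation in $x$ with coefficients in $\gf_q(\theta)$.

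\textbf{Step 2: shift into $\gf_q$.} Set $y=x+6\theta$. Because $6\equiv 1/2 \pmod{11}$, we get $y^q=x+\theta-6\theta=x+6\theta=y$, so $y\in\gf_q$. Expanding in $y$, we expect the $\theta$-dependence in the coefficients of the nonconstant powers of $y$ to cancel. What should remain (this is what the statement's $B$ indicates) is
$$
y\left(y^5+\gamma^{-1}\right)^2=\gamma^{-2}\left(c+6\theta-\gamma\theta^6\right),
$$
after multiplying through by $\gamma^{-2}$. Note that $y(y^5+\gamma^{-1})^2=y^{11}+2\gamma^{-1}y^6+\gamma^{-2}y$. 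One should also check that the right-hand side lies in $\gf_q$; this follows from $\theta^q=-\theta$ and $c^q=c+\theta$.

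\textbf{Step 3: apply Lemma~\ref{len(p-1)/n-}.} Take $p=11$, $m=k$, $n=5$ (so $5\mid 10$) and $d=-\gamma^{-1}$. The hypothesis $d^{(q-1)/5}\neq1$ holds because $(-1)^{(q-1)/5}=1$ ($(q-1)/5$ is even) and $\gamma^{(q-1)/5}\neq1$. The exponent in the lemma is $(p-1)(n-1)/n=8$, matching the statement. The lemma then gives $y$ explicitly as its formula evaluated at $a=\gamma^{-2}(c+6\theta-\gamma\theta^6)$, and $x=y-6\theta=y+5\theta$.

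\textbf{Step 4: local method.} Apply Lemma~\ref{leff-} with $\psi_1(x)=x^q-x$ and $\psi_2(x)=x$. Replacing $\theta$ by $x^q-x$ and $c$ by $x$ turns $a$ into $B$ and yields $f^{-1}$.

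\textbf{Main obstacle.} The heavy part is the characteristic-$11$ expansion in Step 2. One must verify that every $\theta$-dependent term in the coefficients of $y^6$, $y^5,\dots,y$ cancels, and that the constant term collapses exactly to $6\theta-\gamma\theta^6$ (times $\gamma^{-2}$, together with $c$). I would organise this by first expanding in $x$ and then substituting $x=y-6\theta$, repeatedly using $\theta^q=-\theta$.

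\textbf{Caveat on the final form.} Since $x=y+5\theta$, the inverse should carry an added term $5(x^q-x)$, analogous to the $3(x^q-x)$ terms in the earlier theorems; the displayed formula does not show it. I would check whether this term has been dropped in the statement. The alternative is that Step 2's computation forces a different shift, for instance if the normalisation of $y$ differs, and in that case I would adjust the argument to match the displayed formula.
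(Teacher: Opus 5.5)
Your Steps 1--4 are exactly the paper's argument: same $\theta=c^q-c$, same shift $y=x+6\theta$, same reduced equation, Lemma~\ref{len(p-1)/n-} with $n=5$, $d=-\gamma^{-1}$, then Lemma~\ref{leff-}. The expansion you single out does close: $\Tr_q^{q^2}(2x^{3q+3}-x^{q+5})=2\bigl((x+6\theta)^6-5\theta^6\bigr)$ and $x^{11}=y^{11}-6\theta^{11}$, which give $\gamma^2y^{11}+2\gamma y^6+y=c+6\theta-\gamma\theta^6$.

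Your caveat is correct, and you should state it as a conclusion rather than hedge. The condition $x+t\theta\in\gf_q$ forces $2t=1$, so no other normalisation exists. Moreover, the displayed right-hand side depends on $x$ only through $B(x)\in\gf_q$. It therefore maps $\gf_{q^2}$ into $\gf_q$ and cannot be the inverse of a permutation of $\gf_{q^2}$. The paper's proof loses the term $5(x^q-x)$ in its final ``Hence''.

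The genuine gap is Step 3, which you, like the paper, use as a black box: Lemma~\ref{len(p-1)/n-} is false as stated. At input $-d$ its formula returns $0$, because the factor $x+d=0$ kills both the norm and the sum, and $0^{q-2}=0$. But $f(0)=0$ and $-d\neq0$, so $f^{-1}(-d)\neq0$.

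Concretely, take $x=\gamma$ in the theorem. Then $x^q=x$ and $B=\gamma^{-1}$, so the displayed formula returns $0$, with or without $5(x^q-x)$. Yet $f(0)=0\neq\gamma$.

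The lemma's proof goes wrong when it invokes Lemma~\ref{lex^rh(x^r)}. That lemma needs the inverse of $u\mapsto u(u-d)^{p-1}$ evaluated at $x^n$. The paper instead evaluates the inverse of the shifted map $\tilde g(v)=v^p+dv^{p-1}$ at $(x+d)^n$.

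To repair Step 3, put
\[
V=\bigl(1-\N_{11}^q(-\gamma^{-1}B^{5(q-2)})\bigr)\Bigl[\N_{11}^q(-\gamma^{-1}B^{5(q-2)})\sum_{i=0}^{k-1}(-\gamma B^{5})^{\frac{11^{i+1}-1}{10}}B^{5(q-11^{i}-1)}\Bigr]^{q-2},
\]
that is, $V=\tilde g^{-1}(B^5)$ with $\tilde g(v)=v^{11}-\gamma^{-1}v^{10}$. For $B\neq0$ this root is unique, because $-\gamma^{-1}$ is not a fifth power.

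If $y\in\gf_q$ solves $y(y^5+\gamma^{-1})^2=B$, then $V=y^5+\gamma^{-1}$ and $B=yV^2$. Hence $y=B^{q-5}(V-\gamma^{-1})V^{8}$, and this also holds when $B=0$. Your argument therefore proves
$f^{-1}(x)=B^{q-5}(V-\gamma^{-1})V^{8}+5(x^q-x)$, not the displayed formula.
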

\begin{proof}
	For any \( c \in \mathbb{F}_{q^2} \), we consider the solution of the equation
	\begin{equation}\label{eqzha2019thm7c}
		x + \gamma \operatorname{Tr}_q^{q^2} \left( 6\gamma x^{11} + 2x^{3q+3} - x^{q+5} \right) = c
	\end{equation}
	over $\gf_{q^2}$. Since $ \gamma \in \mathbb{F}_q^* $, from \eqref{eqzha2019thm7c} we have $ x^q = x + \theta $, where $ \theta = c^q - c$ and $\theta^q = -\theta$.
	\eqref{eqzha2019thm7c} can be rewritten as
	$$
	x + \gamma(6\gamma x^{11} + 6\gamma(x + \theta)^{11} + 4(x + \theta)^3 x^3 - (x + \theta)x^5 - x(x + \theta)^5) = c.
	$$
	Simplifying the above equation turns to
	$$
	\gamma^2 x^{11} + 2\gamma(x^6 + 3\theta x^5 + \theta^2 x^4 + 8\theta^3 x^3 + 3\theta^4 x^2 + 5\theta^5 x) + x + 6\gamma^2\theta^{11} = c.
	$$
	Let \( y = x + 6\theta \). It leads to \( y \in \mathbb{F}_q \) and
	\begin{equation}\label{eqzha2019thm7y}
		\gamma^{-2}(c + 6\theta - \gamma\theta^6) = y^{11} + 2\gamma^{-1}y^6 + \gamma^{-2}y = y(y^5 + \gamma^{-1})^2.
	\end{equation}	Zha et al. \cite{zha2019permutation} showed that  $	 y(y^5 + \gamma^{-1})^2$ permutes $\gf_{q}.$  Setting $n=5$ and $d=-\gamma^{-1}$ in Lemma  \ref{len(p-1)/n-}, we note that $(-\gamma^{-1})^{(q-1)/5}=\gamma^{(1-q)/5}\neq0$.
	Therefore, by Lemma  \ref{len(p-1)/n-}, the unique solution of \eqref{eqzha2019thm7y} is
	{\scriptsize \begin{align*}
			y=&\,
			a^{q-5} \left(1-\N_{11}^q\left(-\gamma^{-1}(a-\gamma^{-1})^{5(q-2)}\right)\right)\Big[\N_{11}^{q}\left(-\gamma^{-1}(a-\gamma^{-1})^{5(q-2)}\right)\\
			&\,
			\sum_{i=0}^{k-1}\bigl(-\gamma(a-\gamma^{-1})^5\bigr)^{\frac{11^{i+1}-1}{10}}(a-\gamma^{-1})^{5(q-11^{i}-1)}\Big]^{q-2}\\
			&\,
			\Bigg[\left(1-\N_{11}^q\left(-\gamma^{-1}(a-\gamma^{-1})^{5(q-2)}\right)\right)\Big[\N_{11}^{q}\left(-\gamma^{-1}(a-\gamma^{-1})^{5(q-2)}\right)\\
			&\,
			\sum_{i=0}^{k-1}\bigl(-\gamma(a-\gamma^{-1})^5\bigr)^{\frac{11^{i+1}-1}{10}}(a-\gamma^{-1})^{5(q-11^{i}-1)}\Big]^{q-2}+\gamma^{-1}\Bigg]^{8}. \end{align*}}
		where $a=	\gamma^{-2}(c + 6\theta - \gamma\theta^6).$
		Hence, the compositional inverse of $f(x)$ over $\gf_{q^2}$ is
		{\scriptsize \begin{align*}
				f^{-1}(x)=&\,
				B^{q-5} \left(1-\N_{11}^q\left(-\gamma^{-1}(B-\gamma^{-1})^{5(q-2)}\right)\right)\Big[\N_{11}^{q}\left(-\gamma^{-1}(B-\gamma^{-1})^{5(q-2)}\right)\\
				&\,
				\sum_{i=0}^{k-1}\bigl(-\gamma(B-\gamma^{-1})^5\bigr)^{\frac{11^{i+1}-1}{10}}(B-\gamma^{-1})^{5(q-11^{i}-1)}\Big]^{q-2}\\
				&\,
				\Bigg[\left(1-\N_{11}^q\left(-\gamma^{-1}(B-\gamma^{-1})^{5(q-2)}\right)\right)\Big[\N_{11}^{q}\left(-\gamma^{-1}(B-\gamma^{-1})^{5(q-2)}\right)\\
				&\,
				\sum_{i=0}^{k-1}\bigl(-\gamma(B-\gamma^{-1})^5\bigr)^{\frac{11^{i+1}-1}{10}}(B-\gamma^{-1})^{5(q-11^{i}-1)}\Big]^{q-2}+\gamma^{-1}\Bigg]^{8}, \end{align*}
		}
		where $B=	\gamma^{-2}(x + 6(x^q-x) - \gamma(x^q-x)^6).$ This completes the proof.
	\end{proof}	
	\section{The  inverses of the permutation polynomials of the form $x+\gamma \mathrm{Tr}_q^{q^3}(H(x))$ over finite fields}
	
	In this section, we determine the compositional inverses of four classes of permutation polynomials of the form
	\[
	x + \gamma \Tr_q^{q^3}\big(H(x)\big),
	\]
	where $\gamma \in \gf_{q^3}$ and $H(x)$ is a specific polynomial over $\gf_{q^3}$.

	Let $k$ be a positive integer and $q = 3^k$. Let $i$ be an integer with $0 \leq i \leq k-1$, and let $\gamma \in \gf_q^*$ be such that $-\gamma$ is a non-square in $\gf_q$. In this setting,  Zha et al. \cite[Theorem 1]{zha2019permutation} constructed a class of  permutation polynomials over $\gf_{q^3}$ of the form
	$$f(x) = x + \gamma \Tr_q^{q^3}\bigl(x^{(q+2)3^i}\bigr).$$
	
	In what follows, we derive the compositional inverse of such polynomials $f(x)$.  We start by explicitly solving the equation $f(x) = c$ for $x$ in term of $c$. Denoting the solution by $x = g(c)$, we obtain $x = g(f(x))$. By Lemma \ref{leff-}, it follows that the compositional inverse of $f(x)$ is given by $f^{-1}(x) = g(x)$.

	\begin{theorem}\label{th(q+2)3^i}
	For a positive integer  $k,$ let $q = 3^k$ and $i$ be an integer with $0\leq i\leq k-1.$ Let $\gamma \in \gf_q^*$ be such that $-\gamma$ is not a square over $\gf_q$ and  $f(x)=x+\gamma\Tr_q^{q^3}(x^{(q+2)3^i}).$ Then if $i=0,$  the compositional inverse of $f(x)$ over $\gf_{q^3}$  is $$f^{-1}(x)=\left(x-\gamma(x-x^q)^{q^2+2}\right)\left(1+\gamma\left(\Tr_q^{q^3}(x)\right)^{2}\right)^{-1},$$ and if  $i\neq0,$  the compositional inverse of $f(x)$ over $\gf_{q^3}$  is
	\begin{align*}
		f^{-1}(x)=&\,\bigl(-\gamma(x-x^q)^{(q^2+2)3^i}
		+x\bigr)\bigl(1-\left(\Tr_q^{q^3}(x)\right)^{q-1}\bigr)\\+&\,\left(\Tr_q^{q^3}(x)\right)^{q-1}\left(\N_{3^d}^{q}(C)\bigl(1-\N_{3^d}^{q}(C)\bigr)^{q-2}\sum_{j=0}^{k/d-1}C^{-\frac{3^{(j+1)i}-1}{3^i-1}}D^{3^{ij}}+x\right),
	\end{align*}
	where $C=-\gamma^{-1}\left(\Tr_q^{q^3}(x)\right)^{q-2\cdot 3^i-1},$  $D=-x^{3^i}-\left(\Tr_q^{q^3}(x)\right)^{q-2\cdot 3^i-1}\bigl(x-x^q\bigr)^{(q^2+2)3^i}$
	and  $d=\gcd(k ,i).$
	\end{theorem}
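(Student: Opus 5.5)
Following the approach announced before the statement, we fix $c\in\gf_{q^3}$, solve $f(x)=c$ explicitly as $x=g(c)$, and then conclude $f^{-1}=g$ by Lemma \ref{leff-}. Write $T=\Tr_q^{q^3}(x^{(q+2)3^i})\in\gf_q$ and $t=\Tr_q^{q^3}(c)$. Since $\gamma\in\gf_q$, raising $f(x)=c$ to the $q$-th power and subtracting gives $x-x^q=c-c^q$; hence the differences $x^{q^j}-x$ are known in terms of $c$. Taking traces of $f(x)=c$ gives $\Tr_q^{q^3}(x)+3\gamma T=t$, i.e. $\Tr_q^{q^3}(x)=t$ because the characteristic is $3$. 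So $x-x^q$ and $\Tr_q^{q^3}(x)$ are both known, and the only unknown is the scalar $T\in\gf_q$ via $x=c-\gamma T$.

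Next we express $T$. Put $u=x-x^q=c-c^q$. Then $x^q=x-u$ and $x^{q^2}=x-u-u^q$. Using $x^{3^i(q+2)}=(x^{3^i})^q (x^{3^i})^2$, expand $\Tr_q^{q^3}(x^{(q+2)3^i})$ as a polynomial in $X=x^{3^i}$ with coefficients in $u^{3^i},u^{3^iq}$. I expect the $X^3$ terms to cancel, since $3=0$, and the $X^2$ terms to combine so that the cubic collapses to $T=-(\text{trace-type term})\,X^{\cdots}+u^{(q^2+2)3^i}$. More concretely, the relations $x+x^q+x^{q^2}=t$ and $u$ should give $T=t^{3^i}$-dependent linear term plus $-u^{(q^2+2)3^i}$ (sign to be checked). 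The target formula suggests the identity
\[
T=-(x-x^q)^{(q^2+2)3^i}\cdot(\text{const})+t^{2\cdot3^i}\cdot(\cdots)\,x^{3^i},
\]
i.e. $T$ is affine in $x^{3^i}$ with coefficient a power of $t$. Carrying out this symmetric-function computation carefully is the main obstacle. I would first verify the $i=0$ case: there I expect $T=-(x-x^q)^{q^2+2}+t^2x$-type terms, so that $x=c-\gamma T$ becomes $x(1+\gamma t^2)=c-\gamma(c-c^q)^{q^2+2}$. Here $1+\gamma t^2\neq0$ because $-\gamma$ is a nonsquare, which yields the stated $f^{-1}$.

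For $i\neq0$, substituting gives an affine equation $x=c-\gamma(\alpha x^{3^i}+\beta)$ with $\alpha$ a multiple of $t^{2\cdot 3^i}$. Split into two cases. If $t=0$, then $\alpha=0$ and $x=c-\gamma\beta$ directly; this is the factor $1-t^{q-1}$ branch. If $t\neq0$, rewrite the equation as $x^{3^i}-Cx-D'=0$ with $C=-\gamma^{-1}t^{q-2\cdot3^i-1}$ (using $t^{q-1}=1$ to write inverses as powers). The right-hand side is arranged so that the solution appears as a shift of $x$, matching the form $\cdots+x$ in the statement. Note that $x$ actually lies in a coset, but the binomial map $y\mapsto y^{3^i}-Cy$ is considered on $\gf_q$ after subtracting a known part, since $T\in\gf_q$. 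Uniqueness of the solution comes from $f$ being a permutation (Zha et al.). Lemma \ref{lelinearizedsolution} with $m=k$, $r=i$, $d=\gcd(k,i)$ then gives the sum with $\N_{3^d}^{q}(C)$, with $(1-\N)^{q-2}$ serving as the inverse. Finally, combine the two cases with the indicator $t^{q-1}$, replace $c$ by $x$, and invoke Lemma \ref{leff-}.
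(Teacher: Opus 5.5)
Your outline is the paper's proof. You reduce $f(x)=c$ to $x+\gamma\bigl(t^{2\cdot 3^i}x^{3^i}+(c-c^q)^{(q^2+2)3^i}\bigr)=c$ and solve the linear equation for $i=0$. For $i\neq 0$ you split on $t=0$ versus $t\neq 0$, put $x=c+y$ with $y\in\gf_q$, apply Lemma \ref{lelinearizedsolution} with $m=k$, $r=i$, and glue the branches with $t^{q-1}$.

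\textbf{The gap.} The one step everything rests on, the formula for $T$, is left as an expectation. The signs in your guesses change from line to line, and the concrete form you commit to for $i=0$ is wrong. If $T=t^2x-(x-x^q)^{q^2+2}$, then $x=c-\gamma T$ gives $x(1+\gamma t^2)=c+\gamma(c-c^q)^{q^2+2}$. That is not the equation you write next. Your $t=0$ branch for $i\neq 0$ would likewise give $x=c+\gamma(c-c^q)^{(q^2+2)3^i}$, which is the wrong inverse. So, as written, the final formulas are read off from the target rather than derived. The paper does not prove this identity either; it imports the reduced equation from Zha et al.

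\textbf{How to close it.} The expansion you set up does the job in a few lines. With $u=x-x^q$ and $w=u^q$ one has $x^q=x-u$, $x^{q^2}=x-u-w$, $u^{q^2}=x^{q^2}-x=-(u+w)$, and $t=\Tr_q^{q^3}(x)=3x-2u-w=u-w$. In characteristic $3$,
\begin{align*}
\Tr_q^{q^3}(x^{q+2})&=(x-u)x^2+(x-u-w)(x-u)^2+x(x-u-w)^2\\
&=3x^3-(6u+3w)x^2+(4u^2+4uw+w^2)x-u^2(u+w)\\
&=(u-w)^2x+u^2u^{q^2}=t^2x+(x-x^q)^{q^2+2}.
\end{align*}
Both the cubic and the quadratic terms vanish, the linear coefficient is $+t^2$, and the constant enters with a plus sign. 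Since Frobenius commutes with the trace,
\[
T=\bigl(\Tr_q^{q^3}(x^{q+2})\bigr)^{3^i}=t^{2\cdot 3^i}x^{3^i}+(c-c^q)^{(q^2+2)3^i},
\]
which is exactly the paper's reduced equation. From there your argument goes through as in the paper.

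\textbf{Uniqueness for Lemma \ref{lelinearizedsolution}.} Appealing to $f$ being a permutation is not enough by itself; you need one more observation. Either note that every $x=c+y$ with $y\in\gf_q$ automatically has $x-x^q=c-c^q$ and $\Tr_q^{q^3}(x)=t$. Then the roots $y\in\gf_q$ of the reduced equation correspond exactly to solutions of $f(x)=c$. Or argue directly: a nonzero root of $\gamma t^{2\cdot 3^i}y^{3^i}+y$ would make $-\gamma^{-1}=t^{2\cdot 3^i}y^{3^i-1}$ a square, since $3^i-1$ is even. That contradicts $-\gamma$ being a nonsquare.
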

	\begin{proof}
	For any $c \in \gf_{q^3},$  we need to solve that the equation
	$$x + \gamma \mathrm{Tr}_q^{q^3} \bigl( x^{(q+2)3^i} \bigr) = c.$$
	Zha et al. \cite{zha2019permutation} have shown that the above equation  can be rewritten as
	$$x + \gamma \left( \bigl( \Tr_q^{q^3}(c) \bigr)^2 x + \bigl( c^q - c \bigr)^{q^2+2} \right)^{3^i} = c. $$
	In particular,  if $i=0,$ then
	\begin{equation}\label{eq(1)(2+q)3i}
		\left(1+\gamma\left(\Tr_q^{q^3}(c)\right)^2\right)x+\gamma(c-c^q)^{q^2+2}=c,
	\end{equation}
	and if $i\neq 0,$ then
	\begin{equation}\label{eqzhathm1c}
		\gamma\left(\Tr_q^{q^3}(c)\right)^{2\cdot 3^i}x^{3^i}+x+\gamma(c-c^q)^{(q^2+2)3^i}=c,
	\end{equation}
	Equivalently,  substituting $x=y+c$ with $y \in \gf_q$ into \eqref{eqzhathm1c} yields
	\begin{equation}\label{eq(2)(2+q)3i}
		\gamma\left(\Tr_q^{q^3}(c)\right)^{2\cdot 3^i}y^{3^i}+y+\gamma\left(\Tr_q^{q^3}(c)\right)^{2\cdot 3^i}c^{3^i}+\gamma(c-c^q)^{(q^2+2)3^i}=0.
	\end{equation}
	
	For $i=0, $ since $-\gamma $ is not a square in $\gf_q,$ we have $1+\gamma\left(\Tr_q^{q^3}(c)\right)^2\neq 0$, and thus the solution of \eqref{eq(1)(2+q)3i} is
	$x=\left(c-\gamma(c-c^q)^{q^2+2}\right)\left(1+\gamma\left(\Tr_q^{q^3}(c)\right)^{2}\right)^{-1}.$
	Therefore, the compositional inverse of $f(x)$ over $\gf_{q^3}$ is 	$$f^{-1}(x)=\left(x-\gamma(x-x^q)^{q^2+2}\right)\left(1+\gamma\left(\Tr_q^{q^3}(x)\right)^{2}\right)^{-1}.$$
	
	Next, we consider the case $i\neq 0.$
	
	If $\Tr_q^{q^3}(c)=0$, then the solution to \eqref{eqzhathm1c} is \begin{equation}\label{eqc=0}
		x=c-\gamma(c-c^q)^{(q^2+2)3^i}. \end{equation}
	
	Now suppose that $\Tr_q^{q^3}(c)\neq0.$ Since  Zha et al. \cite{zha2019permutation} have shown that the linearized polynomial $\gamma\left(\Tr_q^{q^3}(c)\right)^{2\cdot 3^i}y^{3^i}+y$
	has only the trivial solution, it follows that
	$\gamma\left(\Tr_q^{q^3}(c)\right)^{2\cdot 3^i}y^{3^i}+y$ is a permutation polynomial over $\gf_q$. It follows from  Lemma \ref{lelinearizedsolution} that the solution to \eqref{eq(2)(2+q)3i} can be expressed as
	\begin{align*}	y=\N_{3^d}^{q}(A)\bigl(1-\N_{3^d}^{q}(A)\bigr)^{q-2}\sum_{j=0}^{k/d-1}A^{-\frac{3^{(j+1)i}-1}{3^i-1}}B^{3^{ij}},
	\end{align*}
	where $A=-\gamma^{-1}\left(\Tr_q^{q^3}(c)\right)^{q-2\cdot 3^i-1}$, $B=-c^{3^i}-\left(\Tr_q^{q^3}(c)\right)^{q-2\cdot 3^i-1}\bigl(c-c^q\bigr)^{(q^2+2) 3^i}$
	and  $d=\gcd(k ,i).$

	Hence,  the solution of \eqref{eqzhathm1c} is \begin{equation}\label{eqcneq0}
		x=\N_{3^d}^{q}(A)\bigl(1-\N_{3^d}^{q}(A)\bigr)^{q-2}\sum_{j=0}^{k/d-1}A^{-\frac{3^{(j+1)i}-1}{3^i-1}}B^{3^{ij}}+c.
	\end{equation}
	Thus, from \eqref{eqc=0} and\eqref{eqcneq0}, we obtain the compositional inverse of $f(x)$
	as
	\begin{align*}
		f^{-1}(x)=&\,\bigl(-\gamma(x-x^q)^{(q^2+2)3^i}
		+x\bigr)\bigl(1-\left(\Tr_q^{q^3}(x)\right)^{q-1}\bigr)\\+&\,\left(\Tr_q^{q^3}(x)\right)^{q-1}\left(\N_{3^d}^{q}(C)\bigl(1-\N_{3^d}^{q}(C)\bigr)^{q-2}\sum_{j=0}^{k/d-1}C^{-\frac{3^{(j+1)i}-1}{3^i-1}}D^{3^{ij}}+x\right),
	\end{align*}
	where $C=-\gamma^{-1}\left(\Tr_q^{q^3}(x)\right)^{q-2\cdot 3^i-1},$  $D=-x^{3^i}-\left(\Tr_q^{q^3}(x)\right)^{q-2\cdot 3^i-1}\bigl(x-x^q\bigr)^{(q^2+2)3^i}$
	and  $d=\gcd(k ,i).$
	We complete the proof.
	\end{proof}
	
	Let $k$ be a positive integer, $q = 3^k$, and let $i$ be an integer such that $0 \leq i \leq k-1$. Let $\gamma \in \mathbb{F}_q^*$ be an element for which $-\gamma$ is a non-square in $\mathbb{F}_q$. Zha et al. \cite{zha2019permutation} studied the permutation property of polynomials of the form
	$f(x)=x+\gamma\Tr_q^{q^3}(x^{(2q+1)3^i})$
	over $\mathbb{F}_{q^3}$. Employing a method similar to that of Theorem \ref{th(q+2)3^i}, we investigate the compositional inverse of this permutation polynomial.
	
	\begin{theorem}\label{th(2q+1)3^i}
	For a positive integer  $k,$ let $q = 3^k$ and $i$ be an integer with $0\leq i\leq k-1.$ Let $\gamma \in \gf_q^*$ be such that $-\gamma$ is not a square over $\gf_q$ and  $f(x)=x+\gamma\Tr_q^{q^3}(x^{(q+2)3^i})$ be a polynomial over $\gf_{q^3}.$  Then if $i=0,$  the compositional inverse of $f(x)$ over $\gf_{q^3}$  is $$f^{-1}(x)=\left(x-\gamma(x^q-x)^{2q^2+1}\right)\left(1+\gamma\left(\Tr_q^{q^3}(x)\right)^{2}\right)^{-1},$$ and if  $i\neq0,$  the compositional inverse of $f(x)$ over $\gf_{q^3}$  is  \begin{align*}
		f^{-1}(x)=&\,\bigl(-\gamma(x^q-x)^{(2q^2+1)3^i}
		+x\bigr)\bigl(1-\Tr_q^{q^3}(x)^{q-1}\bigr)\\+&\,\Tr_q^{q^3}(x)^{q-1}\left(\N_{3^d}^{q}(C)\bigl(1-\N_{3^d}^{q}(C)\bigr)^{q-2}\sum_{j=0}^{k/d-1}C^{-\frac{3^{(j+1)i}-1}{3^i-1}}D^{3^{ij}}+x\right),
	\end{align*}
	where $C=-\gamma^{-1}\left(\Tr_q^{q^3}(x)\right)^{q-2\cdot 3^i-1},$  $D=-x^{3^i}-\left(\Tr_q^{q^3}(x)\right)^{q-2\cdot 3^i-1}\bigl(x^q-x\bigr)^{(2q^2+1)3^i}$
	and  $d=\gcd(k ,i).$
	\end{theorem}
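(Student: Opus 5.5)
Throughout I read the polynomial in the statement as $f(x)=x+\gamma\Tr_q^{q^3}\bigl(x^{(2q+1)3^i}\bigr)$. This is the family named in the sentence before the theorem and the one matching the label and the $(x^q-x)^{2q^2+1}$ terms in the formulas, so I take $(q+2)3^i$ in the displayed definition of $f$ to be a typo. The plan mirrors the proof of Theorem \ref{th(q+2)3^i}: solve $f(x)=c$ explicitly for an arbitrary $c\in\gf_{q^3}$, writing the solution as $x=g(c)$, and then conclude $f^{-1}=g$ by Lemma \ref{leff-}.

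\textbf{Step 1: reduce to an equation in $x$ alone.} Put $T=\Tr_q^{q^3}(c)$. Since $\gamma\in\gf_q$ and the characteristic is $3$, taking traces of $f(x)=c$ gives
$$\Tr_q^{q^3}(x)+3\gamma\Tr_q^{q^3}\bigl(x^{(2q+1)3^i}\bigr)=\Tr_q^{q^3}(x)=T.$$
Moreover $x-c\in\gf_q$, so $x^q-x=c^q-c$.

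\textbf{Step 2: the key identity (main obstacle).} I need
$$\Tr_q^{q^3}\bigl(x^{2q+1}\bigr)=\Tr_q^{q^3}(x)^2\,x+(x^q-x)^{2q^2+1},$$
which is the analogue of the identity Zha et al. use for $q+2$. To verify it, expand
$$\Tr_q^{q^3}(x^{2q+1})=x^{2q+1}+x^{2q^2+q}+x^{q^2+2}.$$
Then substitute $x^{q^2}=T-x-x^q$ on both sides and compare monomials in $x$, $x^q$, using that $2=-1$ in characteristic $3$. If a sign or a term differs, I will adjust the correction term; it should come out as $\pm(x^q-x)^{2q^2+1}$. Raising the identity to the $3^i$-th power and replacing $x^q-x$ by $c^q-c$ turns $f(x)=c$ into
$$x+\gamma\bigl(T^2x+(c^q-c)^{2q^2+1}\bigr)^{3^i}=c.$$

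\textbf{Step 3: case $i=0$.} The equation is linear,
$$(1+\gamma T^2)\,x=c-\gamma(c^q-c)^{2q^2+1}.$$
The coefficient $1+\gamma T^2$ is nonzero: otherwise $-\gamma=T^{-2}$ would be a square in $\gf_q$, contrary to the hypothesis. Dividing gives $x$, and replacing $c$ by $x$ gives the stated formula for $f^{-1}$.

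\textbf{Step 4: case $i\ne0$, $T=0$.} Here the equation collapses to
$$x=c-\gamma(c^q-c)^{(2q^2+1)3^i}.$$

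\textbf{Step 5: case $i\ne0$, $T\neq0$.} Write $x=y+c$ with $y\in\gf_q$, which is allowed since $x-c\in\gf_q$ by Step 1. The equation becomes
$$\gamma T^{2\cdot3^i}y^{3^i}+y+\gamma T^{2\cdot3^i}c^{3^i}+\gamma(c^q-c)^{(2q^2+1)3^i}=0.$$
Divide by $\gamma T^{2\cdot3^i}$ and use $T^{-2\cdot3^i}=T^{q-2\cdot3^i-1}$ for $T\in\gf_q^*$. This gives $y^{3^i}-Ay-B=0$ with
$$A=-\gamma^{-1}T^{q-2\cdot3^i-1},\qquad B=-c^{3^i}-T^{q-2\cdot3^i-1}(c^q-c)^{(2q^2+1)3^i}.$$
The map $y\mapsto y^{3^i}-Ay$ is injective on $\gf_q$: a nonzero root would give $A=y^{3^i-1}$, i.e. $-\gamma^{-1}T^{-2\cdot3^i}=y^{3^i-1}$. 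Since $3^i-1$ and $2\cdot 3^i$ are even, this forces $-\gamma$ to be a square, a contradiction. Hence the solution is unique, and Lemma \ref{lelinearizedsolution} with $m=k$, $r=i$, $d=\gcd(k,i)$ gives $y$ explicitly. The factor $\N(A)/(1-\N(A))$ there is rewritten as $\N(A)(1-\N(A))^{q-2}$.

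\textbf{Step 6: glue the cases.} Combine Steps 4 and 5 using the indicator $\Tr_q^{q^3}(x)^{q-1}$, which equals $0$ when the trace vanishes and $1$ otherwise. Replacing $c$ by $x$ then gives the stated $f^{-1}$, with $C,D$ the images of $A,B$, via Lemma \ref{leff-}.
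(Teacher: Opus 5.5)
Your proposal is correct and takes the same route as the paper. The paper's proof treats $f$ as $x+\gamma\Tr_q^{q^3}(x^{(2q+1)3^i})$, exactly as you do, imports the reduced equation $x+\gamma\bigl(\Tr_q^{q^3}(c)^2x+(c^q-c)^{2q^2+1}\bigr)^{3^i}=c$ from Zha et al.\ without proof, and then repeats the three-case argument of Theorem~\ref{th(q+2)3^i}. The only difference is that you check that identity and the injectivity of $y\mapsto y^{3^i}-Ay$ yourself. No sign adjustment is needed in Step 2: the identity holds as written, because $(x^q-x)^{2q^2+1}=(x^q-x)(x-x^{q^2})^2$, and expanding $\Tr_q^{q^3}(x)^2x+(x^q-x)(x-x^{q^2})^2$ in characteristic $3$ gives exactly $x^{2q+1}+x^{2q^2+q}+x^{q^2+2}$.
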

	\begin{proof}
	For any $c \in \mathbb{F}_{q^3} $, we need to solve that the equation
	$$
	x + \gamma \Tr_q^{q^3} \bigl( x^{(2q+1)3^i} \bigr) = c.
	$$
	Zha et al. \cite{zha2019permutation} have shown that the above equation
	leads to
	$$
	x + \gamma \left( \bigl( \Tr_q^{q^3}(c) \bigr)^2 x + \bigl( c^q - c \bigr)^{2q^2 + 1} \right)^{3^i} = c.
	$$
	We can get the desired result similarly as the proof of Theorem \ref{th(q+2)3^i}.
	\end{proof}
	In the subsequent two theorems, a polynomial $\varphi(x)$ is constructed to fulfill $\varphi(x)\circ \left(x+\operatorname{Tr}_q^{q^3}\bigl(H(x)\bigr)\right)=\operatorname{Tr}_q^{q^3}\bigl(H(x)\bigr)$. With the aid of the local method, the compositional inverse of the polynomial $x+\operatorname{Tr}_q^{q^3}\bigl(H(x)$ is subsequently deduced.
	
		Jiang et al. \cite[Theorem 3.3]{jiang2026new} have shown that the polynomial $f(x)$ over $\gf_{q^3}$ of the form $f(x)=x+\gamma\Tr_q^{q^3}\left(x^{q+1}+x^{2q+2}\right)$ is a permutation polynomial  if and only if $\gamma\in \gf_q$ such that  $\gamma t^3+\gamma t+1=0$ has no solution in $\gf_{q}, $ where $q> 2$  is a power of $2.$  Now we investigate the compositional inverse of such polynomial $f(x)$ over $\gf_{q^3}$ in the following theorem.
	\begin{theorem}
		Let $q=2^m$ with a positive integer $m>1.$  For $\gamma \in \gf_{q}^*,$ if the polynomial $f(x)=x+\gamma\Tr_q^{q^3}\left(x^{q+1}+x^{2q+2}\right)$ permutes $\gf_{q^3},$ then the compositional inverse of $f(x)$ over $\gf_{q^3}$ is $$f^{-1}(x)=x+\sum_{i=0}^{m-1}\left(S_{m-2-i}^{2^{i+1}}+\gamma^{2^{i+1}-1}S_i\right)\left(\Tr_q^{q^3}\left(x^{q+1}+x^{2q+2}\right)\right)^{2^i},$$
		where $S_{-1}, S_0, S_1, \cdots$ is the sequence in $\gf_{q}$ with $S_{-1}=0, S_0=1, S_i=S_{i-1}+\gamma^{-2^{i-1}}S_{i-2}.$
	\end{theorem}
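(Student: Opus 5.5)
The plan is to use the second route described before the theorem: find a map $\varphi$ with $\varphi\circ f = \Tr_q^{q^3}(H(x))$, where $H(x)=x^{q+1}+x^{2q+2}$, and then read off $x$ from $f(x)$ and $\Tr_q^{q^3}(H(x))$.

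\textbf{Step 1: how $\Tr_q^{q^3}\circ H$ transforms under $f$.} Write $T=\Tr_q^{q^3}(H(x))\in\gf_q$ and $u=\gamma T\in\gf_q$, so $f(x)=x+u$. Since $u^q=u$, we have
$$(x+u)^{q+1}=x^{q+1}+u(x^q+x)+u^2.$$
Taking $\Tr_q^{q^3}$, the middle term contributes $u\,\Tr_q^{q^3}(x^q+x)=2u\,\Tr_q^{q^3}(x)=0$ in characteristic $2$. The last term contributes $3u^2=u^2$. Hence
$$\Tr_q^{q^3}\bigl(f(x)^{q+1}\bigr)=\Tr_q^{q^3}(x^{q+1})+u^2.$$
Squaring gives $\Tr_q^{q^3}\bigl(f(x)^{2q+2}\bigr)=\Tr_q^{q^3}(x^{2q+2})+u^4$. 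Adding the two identities,
$$\Tr_q^{q^3}\bigl(H(f(x))\bigr)=T+\gamma^2T^2+\gamma^4T^4 .$$

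\textbf{Step 2: normalise to the shape of Lemma \ref{lelinearized421}.} Substitute $t=\gamma T=u$. The identity from Step 1 becomes
$$t^4+t^2+\gamma^{-1}t=\Tr_q^{q^3}\bigl(H(f(x))\bigr).$$
This is $L(t)=\Tr_q^{q^3}(H(f(x)))$ with $L(t)=t^4+bt^2+at$ over $\gf_{2^m}$, where $b=1$ and $a=\gamma^{-1}$. With these values the sequence of Lemma \ref{lelinearized421} is exactly $S_i=S_{i-1}+\gamma^{-2^{i-1}}S_{i-2}$, and the coefficient $a^{1-2^{i+1}}$ equals $\gamma^{2^{i+1}-1}$.

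\textbf{Step 3: show $L$ permutes $\gf_q$.} This is the only real obstacle. Suppose $L$ has a nonzero root $t_0\in\gf_q$. Dividing by $t_0$ gives $t_0^3+t_0+\gamma^{-1}=0$, that is, $\gamma t_0^3+\gamma t_0+1=0$.
\begin{itemize}
\item The first option is to use the Jiang et al.\ characterisation: if $f$ permutes $\gf_{q^3}$, this cubic has no root in $\gf_q$, which is a contradiction.
\item A self-contained alternative is to pick $x_0\in\gf_{q^3}$ with $\Tr_q^{q^3}(x_0)=0$ and adjust it so that $\gamma\,\Tr_q^{q^3}(H(x_0))=t_0$. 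Then $x_0$ and $x_0+t_0$ would give $f(x_0)=f(x_0+t_0)$ by Step 1, a collision. Realising an arbitrary prescribed trace value this way takes extra care, so I would rely on the cited characterisation.
\end{itemize}
Once $L$ permutes $\gf_q$, Lemma \ref{lelinearizedsolution421} applies with $C=\Tr_q^{q^3}(H(f(x)))$. It gives
$$\gamma T=\sum_{i=0}^{m-1}\bigl(S_{m-2-i}^{2^{i+1}}+\gamma^{2^{i+1}-1}S_i\bigr)\bigl(\Tr_q^{q^3}(H(f(x)))\bigr)^{2^i}=:\psi(f(x)).$$

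\textbf{Step 4: conclude with the local method.} We have $x=f(x)+\gamma T=f(x)+\psi(f(x))$, using that subtraction is addition in characteristic $2$. Apply Lemma \ref{leff-} with $\psi_1(x)=x$ and $\psi_2=\psi$, so that $\psi_1\circ f=f$ and $\psi_2\circ f=\gamma T$, and take $F(x_1,x_2)=x_1+x_2$. This yields $f^{-1}(x)=x+\psi(x)$, which is the stated formula.
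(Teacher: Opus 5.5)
Your proposal is correct and follows the paper's proof essentially step for step: the same identity $\Tr_q^{q^3}\bigl(f(x)^{q+1}+f(x)^{2q+2}\bigr)=A^4+A^2+A/\gamma$ with $A=\gamma\Tr_q^{q^3}(x^{q+1}+x^{2q+2})$, the same appeal to Jiang et al.\ to get that $t^4+t^2+\gamma^{-1}t$ permutes $\gf_q$, the same use of Lemma~\ref{lelinearizedsolution421} with $a=\gamma^{-1}$, $b=1$, and the same local-method conclusion with $F(x_1,x_2)=x_1+x_2$. As an aside, your self-contained alternative in Step~3 is easier than you feared: the Step~1 computation only uses $u\in\gf_q$, so it gives $f(x+u)=f(x)+\gamma L(u)$ for every $u\in\gf_q$, and a nonzero root $t_0\in\gf_q$ of $L$ would then force $f(x+t_0)=f(x)$ for all $x$, with no prescribed trace value needed.
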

	\begin{proof}
		Jiang et. al \cite{jiang2026new} has shown that if $f(x)$ is a permutation polynomial over $\gf_{q^3}$, then $\gamma\in \gf_q$ such that  $\gamma t^3+\gamma t+1=0$ has no solution in $\gf_{q}.$
		
		For simplicity, put $A=\gamma\Tr_q^{q^3}\left(x^{q+1}+x^{2q+2}\right)$. Since $\gamma \in \gf_q^*,$ we have $A \in \gf_q.$
		And so
		$$f^{q+1}(x)=(x+A)^{q+1}=x^{q+1}+A(x^q+x)+A^2,$$ and
		$$f^{2q+2}(x)=x^{2q+2}+A^2(x^q+x)^2+A^4.$$
		These equalities imply that
		\begin{align*}
			&\,\Tr_q^{q^3}\left(f^{q+1}(x)+f^{2q+2}(x)\right)\\
			=&\,\Tr_q^{q^3}\left(x^{q+1}+A(x^q+x)+A^2+x^{2q+2}+A^2(x^q+x)^2+A^4\right)\\
			=&\,\Tr_q^{q^3}\left(x^{q+1}+x^{2q+2}\right)+A\Tr_q^{q^3}(x^q+x)+A^2\Tr_q^{q^3}(x^q+x)^2+(A^2+A^4)\Tr_q^{q^3}(1)\\
			=&\,A^4+A^2+A/\gamma.
		\end{align*}		
		
		Since the equation $\gamma t^3+\gamma t +1=0$ has no solution in $\gf_q,$  the linearized polynomial $A^4+A^2+A/\gamma$ is a permutation polynomial over $\gf_q$. By Lemma \ref{lelinearizedsolution421}, we thus have $$A=\sum_{i=0}^{m-1}\left(S_{m-2-i}^{2^{i+1}}+\gamma^{2^{i+1}-1}S_i\right)\left(\Tr_q^{q^3}\left(f^{q+1}(x)+f^{2q+2}(x)\right)\right)^{2^i},$$
		where $S_{-1}, S_0, S_1, \cdots S_{m-1}$ is a sequence in $\gf_{q}$ with $S_{-1}=0, S_0=1$ and $ S_i=S_{i-1}+\gamma^{-2^{i-1}}S_{i-2}.$
		
		Taking 			
		$$\psi_{1}(x)=\sum_{i=0}^{m-1}\left(S_{m-2-i}^{2^{i+1}}+\gamma^{2^{i+1}-1}S_i\right)\left(\Tr_q^{q^3}\left(x^{q+1}+x^{2q+2}\right)\right)^{2^i},\quad
		\psi_2(x)=x,$$ $$\varphi_1(x)=\sum_{i=0}^{m-1}\left(S_{m-2-i}^{2^{i+1}}+\gamma^{2^{i+1}-1}S_i\right)\left(\Tr_q^{q^3}\left(f^{q+1}(x)+f^{2q+2}(x)\right)\right)^{2^i}, \quad \varphi_2(x)=f(x),$$
		we have $$\varphi_1(x)+\varphi_2(x)=x.$$ Then it follows from Lemma \ref{leff-} that the compositional inverse of $f(x)$ over $\gf_{q^3}$ is $$f^{-1}(x)=x+\sum_{i=0}^{m-1}\left(S_{m-2-i}^{2^{i+1}}+\gamma^{2^{i+1}-1}S_i\right)\left(\Tr_q^{q^3}\left(x^{q+1}+x^{2q+2}\right)\right)^{2^i}.$$
		We complete the proof.
	\end{proof}

	Jiang et al. investigated the permutation behavior of the polynomial of the form $x+\Tr_q^{q^3}(x^{(q^2+q)/2}+x^{2q+1})$ over $\gf_{q^3}$ in \cite[Theorem 4.4]{jiang2026new},  where $m$ is odd and $q=2^m.$ Now we present the compositional inverse of this polynomial below.
	\begin{theorem}
	For an odd integer $m,$ let $q=2^m.$ Then the compositional inverse of $f(x)=x+\Tr_q^{q^3}(x^{(q^2+q)/2}+x^{2q+1})$ over $\gf_{q^3}$ is
	$$f^{-1}(x)=x+\left(\Tr_q^{q^3}\left(	x\right)^3+\Tr_q^{q^3}\left(	x^{(q^2+q)/2}\right)+	\Tr_q^{q^3}\left(	x^{2q+1}\right)\right)^{(2q-1)/3}+\Tr_q^{q^3}\left(x\right).$$
	\end{theorem}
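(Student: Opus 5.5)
The plan is to use the second technical route of the paper: express $A=\Tr_q^{q^3}(H(x))$, with $H(x)=x^{(q^2+q)/2}+x^{2q+1}$, as a function of $f(x)$ alone. Then $x=f(x)+A$ (characteristic $2$), and Lemma \ref{leff-} gives the inverse.

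\textbf{Step 1: trace of $f$ and $H\circ f$.} Put $T=\Tr_q^{q^3}(x)$, $A=\Tr_q^{q^3}(H(x))\in\gf_q$ and $S=\Tr_q^{q^3}(f(x))$. Since $A\in\gf_q$, we have $\Tr_q^{q^3}(A)=3A=A$, so $S=T+A$, i.e. $T=S+A$. Next I expand $H(f(x))=H(x+A)$ using $A^q=A$.
\begin{itemize}
\item The term $x^{2q+1}$ gives $(x+A)^{2q+1}=(x^{2q}+A^2)(x+A)=x^{2q+1}+Ax^{2q}+A^2x+A^3$. Its trace is $\Tr_q^{q^3}(x^{2q+1})+AT^2+A^2T+A^3$.
\item For the term $x^{(q^2+q)/2}=(x^{q+1})^{q/2}$, I use $(x+A)^{q+1}=x^{q+1}+A(x^q+x)+A^2$. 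Raising to the power $q/2$ (additive in characteristic $2$) gives $x^{(q^2+q)/2}+A^{q/2}(x^q+x)^{q/2}+A^q$.
\item The middle term has zero trace, because $\Tr_q^{q^3}\bigl((x^q+x)^{q/2}\bigr)=\bigl(\Tr_q^{q^3}(x^q+x)\bigr)^{q/2}=(2T)^{q/2}=0$. The last term contributes $\Tr_q^{q^3}(A)=A$.
\end{itemize}
Adding the two contributions, the two copies of $A$ cancel:
$$\Tr_q^{q^3}(H(f(x)))=A+AT^2+A^2T+A^3 + A = AT^2+A^2T+A^3 .$$

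\textbf{Step 2: eliminate $T$.} Substituting $T=S+A$ gives $AS^2+A^2S+A^3$. Hence
$$\Tr_q^{q^3}(H(f(x)))+S^3=(A+S)^3 .$$
Since $m$ is odd, $\gcd(3,q-1)=1$, so cubing permutes $\gf_q$. Its inverse is $u\mapsto u^{(2q-1)/3}$, because $3\cdot\frac{2q-1}{3}=2q-1\equiv 1 \pmod{q-1}$. Therefore
$$A=S+\bigl(\Tr_q^{q^3}(H(f(x)))+S^3\bigr)^{(2q-1)/3}.$$

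\textbf{Step 3: apply the local method.} Take
$$\psi_1(x)=\Tr_q^{q^3}(x)+\bigl(\Tr_q^{q^3}(x)^3+\Tr_q^{q^3}(x^{(q^2+q)/2})+\Tr_q^{q^3}(x^{2q+1})\bigr)^{(2q-1)/3}$$
and $\psi_2(x)=x$. Then $\psi_1\circ f=A$ and $\psi_2\circ f=f(x)$, and these satisfy $x=f(x)+A=\varphi_2+\varphi_1$. Lemma \ref{leff-} then yields the stated $f^{-1}(x)$, and it also shows that $f$ permutes $\gf_{q^3}$.

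The main obstacle is the careful expansion of the half-exponent term $x^{(q^2+q)/2}$ in Step 1, in particular justifying that the cross term has zero trace. Everything after that is a clean cube identity.
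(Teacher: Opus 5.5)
Your proof is correct and follows essentially the same route as the paper. Both write $f(x)=x+A$ with $A\in\gf_q$, show that $\Tr_q^{q^3}(f(x))^3+\Tr_q^{q^3}(H(f(x)))=(\Tr_q^{q^3}(f(x))+A)^3$, invert the cube with the exponent $(2q-1)/3$, and conclude by Lemma \ref{leff-} with $\psi_2(x)=x$. The differences are cosmetic: you expand $(x^{q+1})^{q/2}$ directly and kill the cross term via $\Tr_q^{q^3}(x^q+x)=0$, where the paper computes $\Tr_q^{q^3}(f(x)^{q^2+q})$ and tacitly takes a square root; and you substitute $T=S+A$ rather than expanding $A\,\Tr_q^{q^3}(f(x))^2$ and $A^2\,\Tr_q^{q^3}(f(x))$ separately.
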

	\begin{proof}
	For the sake of  simplicity, let $B$ denote $\Tr_q^{q^3}(x^{(q^2+q)/2}+x^{2q+1}).$ Then we have $B \in \gf_q$ and  \begin{equation}\label{eqjiangth4.4}
		f(x)=x+B
	\end{equation}
	Consequently, by \eqref{eqjiangth4.4}, we obtain
	\begin{align*}
		\Tr_q^{q^3}\left(	f(x)^{q^2+q}\right)=&\,	\Tr_q^{q^3}\left((x+B)^{q^2+q}\right)\\=&\,\,\Tr_q^{q^3}\left(x^{q^2+q}+B(x^{q^2}+x^q)+B^2\right)\\
		=&\,\Tr_q^{q^3}\left(x^{q^2+q}\right)+B^2,\\
		\Tr_q^{q^3}\left(	f(x)^{2q+1}\right)=&\,\Tr_q^{q^3}\left((x+B)^{2q+1}\right)\\
		=&\,	\Tr_q^{q^3}\left(x^{2q+1}+Bx^{2q}+B^2x+B^3\right)\\
		=&\,\Tr_q^{q^3}(x^{2q+1})+B\Tr_q^{q^3}(x^2)+B^2\Tr_q^{q^3}(x)+B^3,\\
		B\Tr_q^{q^3}\left(f(x)\right)^2=&\,B\Tr_q^{q^3}(x^2)+B^3,\\
		B^2\Tr_q^{q^3}\left(f(x)\right)=
		&\,B^2\Tr_q^{q^3}\left(x\right)+B^3.
	\end{align*}
	Combining these equalities yields that
	\begin{align*}
		&\,\Tr_q^{q^3}\left(	f(x)^{(q^2+q)/2}\right)+	\Tr_q^{q^3}\left(	f(x)^{2q+1}\right)+	B\Tr_q^{q^3}\left(f(x)\right)^2+	B^2\Tr_q^{q^3}\left(f(x)\right)\\
		=&\, \Tr_q^{q^3}\left(x^{(q^2+q)/2}\right)+B+\Tr_q^{q^3}(x^{2q+1})+B^3\\
		=&\,B^3,	
	\end{align*}
	where the last equality follows from the definition $B=\Tr_q^{q^3}(x^{(q^2+q)/2})+\Tr_q^{q^3}(x^{2q+1})$.  Using the above equality, we further derive that
	\begin{align}\label{cubiceq5}
		&\,	\Tr_q^{q^3}\left(	f(x)\right)^3+\Tr_q^{q^3}\left(	f(x)^{(q^2+q)/2}\right)+	\Tr_q^{q^3}\left(	f(x)^{2q+1}\right)\nonumber\\=&\,\Tr_q^{q^3}\left(	f(x)\right)^3+B\Tr_q^{q^3}\left(f(x)\right)^2+	B^2\Tr_q^{q^3}\left(f(x)\right)+B^3\nonumber\\
		=&\,\left(\Tr_q^{q^3}\left(	f(x)\right)+B\right)^3.		\end{align}

	Since $m$ is odd  and $q=2^m$, we have $\gcd(3,q-1) = 1$.  A simple computation then shows that the integer $\frac{2q - 1}{3}$ is exactly the multiplicative inverse of 3 modulo $q - 1$; indeed,
	$$3\cdot \frac{2q-1}{3}
	=2q-1\equiv 2q-1-2(q-1)=1\pmod{q-1}.$$
	Applying this inverse property to \eqref{cubiceq5}, we have
	\begin{equation*}
		\bigl(\mathrm{Tr}_q^{q^3}\bigl(f(x)\bigr)^3 + \mathrm{Tr}_q^{q^3}\bigl(f(x)^{(q^2+q)/2}\bigr) + \mathrm{Tr}_q^{q^3}\bigl(f(x)^{2q+1}\bigr)\bigr)^{\frac{2q - 1}{3}} = \mathrm{Tr}_q^{q^3}\bigl(f(x)\bigr) + B.
	\end{equation*}
	Rearranging the above equation to solve for $B$, we obtain
	\begin{equation}
		B = \bigl(\mathrm{Tr}_q^{q^3}\bigl(f(x)\bigr)^3 + \mathrm{Tr}_q^{q^3}\bigl(f(x)^{(q^2+q)/2}\bigr) + \mathrm{Tr}_q^{q^3}\bigl(f(x)^{2q+1}\bigr)\bigr)^{\frac{2q - 1}{3}} +\mathrm{Tr}_q^{q^3}\bigl(f(x)\bigr).
	\end{equation}
	
	Taking
	\begin{align*}
		&\,
		\psi_1=	\left(\Tr_q^{q^3}\left(	x\right)^3+\Tr_q^{q^3}\left(	x^{(q^2+q)/2}\right)+	\Tr_q^{q^3}\left(	x^{2q+1}\right)\right)^{(2q-1)/3}+\Tr_q^{q^3}\left(x\right),\\
		&\,
		\varphi_1=\left(\Tr_q^{q^3}\left(	f(x)\right)^3+\Tr_q^{q^3}\left(	f(x)^{(q^2+q)/2}\right)+	\Tr_q^{q^3}\left(	f(x)^{2q+1}\right)\right)^{(2q-1)/3}+\Tr_q^{q^3}\left(f(x)\right),
	\end{align*}
	$\psi_2=x$ and $\varphi_2(x)=f(x)$ in Lemma \ref{leff-}, we get $$\varphi_2(x)+\varphi_1(x)=x.$$
	It follows from Lemma \ref{leff-} that the compositional inverse of $f(x)$ over $\gf_{q^3}$ is
	$$f^{-1}(x)=x+\left(\Tr_q^{q^3}\left(	x\right)^3+\Tr_q^{q^3}\left(	x^{(q^2+q)/2}\right)+	\Tr_q^{q^3}\left(	x^{2q+1}\right)\right)^{(2q-1)/3}+\Tr_q^{q^3}\left(x\right).$$
	This is the desired result.
	\end{proof}

	\section{The compositional inverses of the permutation polynomials of the form $x+\gamma \mathrm{Tr}_q^{q^n}(H(x))$ over finite fields}
In this section, we first revisit and rigorously prove the compositional inverse formula for permutation polynomials of the form
$
f(x)=x+\gamma g(x),
$
where $\gamma\in\mathbb{F}_{q^n}$ is a $b$-linear translator of the function $g:\mathbb{F}_{q^n}\rightarrow\mathbb{F}_q$, by adopting the local method. Building on this fundamental result, we characterize several families of permutation polynomials with the form
$
x+\gamma \Tr_q^{q^n}\bigl(H(x)\bigr),
$
and explicitly derive their corresponding compositional inverses. As corollaries, a variety of scattered results on permutation polynomials reported in existing literature can be recovered as special cases of our main conclusions. Furthermore, we explicitly determine the compositional inverse of the polynomial
$
x+\gamma\Tr_{q}^{q^n}(x^3-x^{q+2})
$
defined over the finite field $\mathbb{F}_{q^n}$.
 To facilitate the subsequent analysis and help readers better understand our derivations, we first recall the definition of a $b$-linear translator.

\begin{definition}
	\cite{kyureghyan2011constructing,akbary2011constructing,charpin2009when}
	For $S\subseteq \gf_q,$ $\alpha, b \in \gf_q$ and a map $\lambda: \gf_q\rightarrow \gf_q,$ $\alpha$ is a $b$-linear translator of $\lambda$ with respect to $S$ if $\lambda(x+u\alpha)=\lambda(x)+ub$ for all $x \in \gf_q$ and $u \in S.$
\end{definition}

Kyureghyan \cite{kyureghyan2011constructing}  investigated the inverse of permutation polynomial of the form  $f(x)=x+\gamma g(x)$ over $\gf_{q^n}$, where  $\gamma\in\mathbb{F}_{q^n}$ is  a $b$-linear translator of $g:\mathbb{F}_{q^n}\rightarrow\mathbb{F}_{q}$ and $b\neq -1$ .  We provide a new proof of their result using the local method in the following.

\begin{proposition}\label{thlineartranslator}
	Let $\gamma\in\mathbb{F}_{q^n}$ be a $b$-linear translator of $g(x):\mathbb{F}_{q^n}\rightarrow\mathbb{F}_{q}$. Then $f(x)=x+\gamma g(x)$  is a  permutation polynomial of $\mathbb{F}_{q^n}$ if and only if $b\neq-1$. Moreover, if $f(x)$ is a permutation polynomials over $\gf_{q^n}$, then its inverse is given by $f^{-1}(x)=x-\frac{\gamma}{b+1}g(x)$.
\end{proposition}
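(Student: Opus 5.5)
The key identity is the translator property with $u=g(x)\in\gf_q$: since $\gamma$ is a $b$-linear translator of $g$ with respect to $\gf_q$,
\[
g(f(x))=g\bigl(x+g(x)\gamma\bigr)=g(x)+g(x)b=(b+1)g(x)\qquad\text{for all } x\in\gf_{q^n}.
\]
This is exactly the shape required by Lemma~\ref{leff-}. The polynomial $\psi(x)=g(x)$ composed with $f$ yields $(b+1)g(x)$, which is a scalar multiple of the quantity we need to subtract from $f(x)$ to recover $x$.

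For the direction $b\neq-1\Rightarrow$ permutation, together with the inverse, I will apply Lemma~\ref{leff-} with $t=2$. Take
\[
\psi_1(x)=x,\qquad \varphi_1(x)=f(x),\qquad \psi_2(x)=\tfrac{1}{b+1}g(x),\qquad \varphi_2(x)=g(x).
\]
Then $\psi_1\circ f=\varphi_1$ holds trivially, and $\psi_2\circ f=\varphi_2$ holds by the identity above. Since $x=f(x)-\gamma g(x)$, we have $x=F(\varphi_1,\varphi_2)$ with $F(x_1,x_2)=x_1-\gamma x_2$. Lemma~\ref{leff-} therefore shows that $f$ permutes $\gf_{q^n}$ and that
\[
f^{-1}(x)=F(\psi_1,\psi_2)=x-\tfrac{\gamma}{b+1}g(x).
\]
As a sanity check, one can verify directly that $f^{-1}\circ f=x$. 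Indeed, $f(x)-\frac{\gamma}{b+1}g(f(x))=x+\gamma g(x)-\gamma g(x)=x$.

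For the converse, suppose $b=-1$. The case $\gamma=0$ needs care: then $g(x+0)=g(x)-u$ forces $u=0$ for every $u\in\gf_q$, which is impossible, so $\gamma\neq 0$. Pick any $x$. For each $u\in\gf_q$, the translator property gives
\[
f(x+u\gamma)=x+u\gamma+\gamma\bigl(g(x)-u\bigr)=x+\gamma g(x)=f(x).
\]
Taking $u=1$ gives two distinct points with the same image, since $\gamma\neq0$. Hence $f$ is not injective on the finite set $\gf_{q^n}$, so it is not a permutation.

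The main point needing care is the choice of the set $S$ in the definition of the translator. I take the statement to mean $S=\gf_q$, which is what makes $u=g(x)$ legitimate. Beyond that, the argument is routine.
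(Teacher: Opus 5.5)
Your proof is correct and follows the same route as the paper: the identity $g(f(x))=(b+1)g(x)$ is fed into Lemma~\ref{leff-} with $\psi=x$ and $\psi=g$, the only difference being that you put the factor $\frac{1}{b+1}$ into $\psi_2$ rather than into $F$. Your argument for $b=-1$ (first $\gamma\neq 0$, then $f(x+\gamma)=f(x)$) supplies a justification that the paper merely asserts.
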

\begin{proof}
	Since  $\gamma\in\mathbb{F}_{q^n}$ is  a $b$-linear translator of $g(x)$, we have  $$g(x)\circ f(x)=g(x+\gamma g(x))=g(x)+bg(x)=(b+1)g(x).$$ If $b=-1$, then $f(x)$ is not a permutation polynomial over $\gf_{q^n}.$ Now we assume that $b\neq-1.$
	Taking $\psi_1(x)=g(x), $ $\varphi_1(x)=g(x)\circ f(x), $  $\psi_2(x)=x,$ and $\varphi_2(x)=x,$ we have $$\varphi_2(x)-\frac{\gamma}{b+1}\varphi_1(x)=x.$$ It follows from Lemma \ref{leff-} that the compositional inverse of $f(x)$ is $$f^{-1}(x)=x-\frac{\gamma}{b+1}g(x).$$ This completes the proof
\end{proof}

Applying Proposition  \ref{thlineartranslator}, we immediately obtain the following corollaries.
\begin{corollary}\label{colinear}
	Let $q$ be an power of $p$ and $n$ be a positive integer with $p\mid n.$  Let $L(x)\in \gf_q[x]$ be an additive polynomial. For any $\gamma\in \gf_{q}$, the polynomial $f(x)=x+\gamma\Tr^{q^n}_q\bigl(L(x)\bigr)$ permutes $\gf_{q^n},$ and the compositional inverse of $f(x)$ is $$f^{-1}(x)=x-\gamma\Tr^{q^n}_q\bigl(L(x)\bigr).$$
\end{corollary}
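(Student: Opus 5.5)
The plan is to apply Proposition \ref{thlineartranslator} with $g(x)=\Tr^{q^n}_q\bigl(L(x)\bigr)$, which maps $\gf_{q^n}$ into $\gf_q$. It suffices to show that $\gamma$ is a $0$-linear translator of $g$ with respect to $S=\gf_q$. Then $b=0\neq -1$, so $f(x)=x+\gamma g(x)$ permutes $\gf_{q^n}$, and its inverse is $x-\frac{\gamma}{0+1}g(x)=x-\gamma\Tr^{q^n}_q\bigl(L(x)\bigr)$.

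The key step is the identity $g(x+u\gamma)=g(x)$ for all $x\in\gf_{q^n}$ and $u\in\gf_q$.
\begin{itemize}
\item Since $L$ is additive, $L(x+u\gamma)=L(x)+L(u\gamma)$.
\item Since $\Tr^{q^n}_q$ is additive, $g(x+u\gamma)=g(x)+\Tr^{q^n}_q\bigl(L(u\gamma)\bigr)$.
\item Because $L\in\gf_q[x]$ and $u\gamma\in\gf_q$, we have $L(u\gamma)\in\gf_q$.
\item For $a\in\gf_q$, every conjugate $a^{q^i}$ equals $a$, so $\Tr^{q^n}_q(a)=na$. Since $p\mid n$, this is $0$ in characteristic $p$.
\end{itemize}
Hence $g(x+u\gamma)=g(x)+0\cdot u$, so $\gamma$ is a $0$-linear translator.

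The only point that needs care is the role of the field: Proposition \ref{thlineartranslator} is proved with $g\circ f$ evaluated via $g(x+\gamma g(x))$ where $g(x)\in\gf_q$, so translators with respect to $S=\gf_q$ are exactly what is needed, and the argument above covers this. If $\gamma=0$ the statement is trivial, since then $f(x)=x$.

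I do not expect a real obstacle. The one thing to get right is that $L(u\gamma)$ lies in the base field $\gf_q$, which holds because both the coefficients of $L$ and $\gamma$ lie in $\gf_q$. This is what lets the hypothesis $p\mid n$ kill the trace.
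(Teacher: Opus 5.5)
Your proof is correct and follows the paper's argument. Both show that $\Tr_q^{q^n}\bigl(L(x+d)\bigr)=\Tr_q^{q^n}\bigl(L(x)\bigr)+nL(d)=\Tr_q^{q^n}\bigl(L(x)\bigr)$ for every $d\in\gf_q$, and then invoke Proposition~\ref{thlineartranslator} with $b=0$. The only cosmetic difference is that the paper phrases this as ``$1$ is a $0$-linear translator,'' whereas you take $d=u\gamma$ and verify directly that $\gamma$ itself is one, which is exactly the form the proposition requires.
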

\begin{proof}
	For any $d\in \gf_q,$ 
	since $L(x)\in \gf_q[x]$ is  an additive polynomial and $p \mid n$, we have
	\begin{equation*}
		\Tr_q^{q^n}\bigl(L(x+d)\bigr)=\Tr_q^{q^n}\bigl(L(x)+L(d)\bigr)=\Tr_q^{q^n}\bigl(L(x)\bigr)+nL(d)=\Tr_q^{q^n}\bigl(L(x)\bigr). 
	\end{equation*} 
	Consequently, $1$ is a $0$-linear translator of $\Tr_q^{q^n}\bigl(L(x)\bigr).$ Hence, 	by Proposition \ref{thlineartranslator}, we get 
	the desired result.
\end{proof}
Similarly, we obtain the following result, and we omit the detailed proof here.
\begin{corollary}
	Let $q$ be a power of $2$ and $n$ be  even. For any $\gamma \in \gf_q$, the polynomial  $$f(x)=x+\gamma \Tr_q^{q^n}(x^{q^i+1})$$ is an involution over $\gf_{q^n}.$
\end{corollary}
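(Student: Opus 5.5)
The plan is to follow the proof of Corollary \ref{colinear}: show that $\gamma$ is a $0$-linear translator of $g(x)=\Tr_q^{q^n}(x^{q^i+1})$ with respect to $\gf_q$, and then apply Proposition \ref{thlineartranslator}. Characteristic $2$ then turns the inverse it gives into $f$ itself.

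First, fix $u\in\gf_q$ and expand $(x+u)^{q^i+1}=(x^{q^i}+u^{q^i})(x+u)$. Since $u\in\gf_q$, we have $u^{q^i}=u$, so
$$(x+u)^{q^i+1}=x^{q^i+1}+u\,x^{q^i}+u\,x+u^2.$$
Apply $\Tr_q^{q^n}$, which is $\gf_q$-linear and satisfies $\Tr_q^{q^n}(x^{q^i})=\Tr_q^{q^n}(x)$. This gives
$$g(x+u)=g(x)+2u\,\Tr_q^{q^n}(x)+n u^2 .$$
Because $q$ is even, $2u\Tr_q^{q^n}(x)=0$. Because $n$ is even and $u^2$ lies in a field of characteristic $2$, $nu^2=0$. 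Hence $g(x+u)=g(x)$ for all $x\in\gf_{q^n}$ and $u\in\gf_q$.

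Next, since $\gamma\in\gf_q$, every product $u\gamma$ with $u\in\gf_q$ again lies in $\gf_q$. Therefore $g(x+u\gamma)=g(x)=g(x)+u\cdot 0$, so $\gamma$ is a $0$-linear translator of $g:\gf_{q^n}\to\gf_q$. This is exactly the property used in the proof of Proposition \ref{thlineartranslator}, namely $g(x+\gamma g(x))=g(x)$, taking $u=g(x)\in\gf_q$.

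Finally, Proposition \ref{thlineartranslator} with $b=0\neq -1$ shows that $f$ permutes $\gf_{q^n}$ with $f^{-1}(x)=x-\gamma g(x)$. In characteristic $2$ we have $-\gamma=\gamma$, so $f^{-1}=f$ and $f$ is an involution. The case $\gamma=0$ is trivial, since then $f(x)=x$. The only point that needs care is that both cross terms vanish: the $x$-dependent term dies because $\Tr_q^{q^n}(x^{q^i})=\Tr_q^{q^n}(x)$ together with characteristic $2$, and the constant term dies because $n$ is even.
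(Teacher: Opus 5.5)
Your proposal is correct and follows the route the paper intends; the paper omits the proof and only says it goes ``similarly'' to Corollary \ref{colinear}. That route is: show $\Tr_q^{q^n}(x^{q^i+1})$ is invariant under translation by $\gf_q$, so $\gamma\in\gf_q$ is a $0$-linear translator, then apply Proposition \ref{thlineartranslator}, where $-\gamma=\gamma$ in characteristic $2$ gives $f^{-1}=f$. Your explicit check that $u\gamma\in\gf_q$ for $u\in\gf_q$ is exactly what the proposition needs, and is cleaner than the proof of Corollary \ref{colinear}, which only asserts that $1$ is a translator.
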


\begin{corollary}\label{thm(x^{q^i+p^t}-x^{q^j+p^t})}
	Let $ q = p^k$ for a prime $p$ and a positive integer $k$. Let $n$ be a positive integer, let $ i, j, t$ be non-negative integers, and let $ \gamma $ be an element of  $\mathbb{F}_q$.  Then the polynomial $$f(x)=x+\gamma\Tr_q^{q^n}(x^{q^i+p^t}-x^{q^j+p^t})$$ permutes $\gf_{q^n},$ and  the compositional inverse of $f(x)$ over $\gf_{q^n}$ is given by $$f^{-1}(x)=x-\gamma\Tr_q^{q^n}(x^{q^i+p^t}-x^{q^j+p^t}).$$	
\end{corollary}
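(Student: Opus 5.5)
The plan is to show that $1$ is a $0$-linear translator of $g(x)=\Tr_q^{q^n}\bigl(x^{q^i+p^t}-x^{q^j+p^t}\bigr)$. Once that is established, Proposition \ref{thlineartranslator} applies with $b=0\neq -1$: it gives that $f(x)=x+\gamma g(x)$ permutes $\gf_{q^n}$ and that $f^{-1}(x)=x-\gamma g(x)$.

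Two preliminary remarks. First, $g$ takes values in $\gf_q$, so $\gamma g(x)\in\gf_q$ whenever $\gamma\in\gf_q$. Second, the translator property has to be checked for $u\gamma$ with $u\in\gf_q$, i.e. we need $g(x+u)=g(x)$ for all $u\in\gf_q$. Because $\gamma\in\gf_q$, it is cleanest to treat $1$ as the translator and absorb $\gamma$ into the coefficient; equivalently, verify directly that $g(x+\gamma g(x))=g(x)$.

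The key computation is the following. Fix $u\in\gf_q$. Then $(x+u)^{q^i}=x^{q^i}+u$ and $(x+u)^{p^t}=x^{p^t}+u^{p^t}$. Expanding,
$$(x+u)^{q^i+p^t}=x^{q^i+p^t}+u^{p^t}x^{q^i}+u\,x^{p^t}+u^{1+p^t}.$$
The same expansion with $j$ in place of $i$ has the identical terms $u x^{p^t}$ and $u^{1+p^t}$. Subtracting the two expansions, these common terms cancel, and we are left with
$$x^{q^i+p^t}-x^{q^j+p^t}+u^{p^t}\bigl(x^{q^i}-x^{q^j}\bigr).$$
Now apply $\Tr_q^{q^n}$. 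Since $u^{p^t}\in\gf_q$, linearity applies, and $\Tr_q^{q^n}(x^{q^i})=\Tr_q^{q^n}(x)=\Tr_q^{q^n}(x^{q^j})$ because the trace is invariant under the Frobenius $x\mapsto x^q$. Hence the extra term vanishes and $g(x+u)=g(x)$ for every $u\in\gf_q$.

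This is exactly the statement that $1$ is a $0$-linear translator of $g$ with respect to $\gf_q$, which completes the argument. The main point of care, rather than a genuine obstacle, is confirming that the mixed terms cancel or trace to zero without any hypothesis on $p\mid n$, in contrast with Corollary \ref{colinear}. This works because the constant $u^{1+p^t}$ appears in both expansions and cancels in the difference, instead of producing a term like $n\,u^{1+p^t}$.
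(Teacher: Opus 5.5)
Your proposal is correct and takes the route the paper intends. The paper gives no separate proof and obtains this corollary from Proposition \ref{thlineartranslator} exactly as in Corollary \ref{colinear}, by checking that $1$ (and hence $\gamma\in\gf_q$) is a $0$-linear translator of $\Tr_q^{q^n}(x^{q^i+p^t}-x^{q^j+p^t})$. Your expansion of $(x+u)^{q^i+p^t}$, together with the Frobenius invariance of the trace, supplies the omitted verification, and you are right that no hypothesis $p\mid n$ is needed.
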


\begin{remark}\label{re(x^{q^i+p^t}-x^{q^j+p^t})}
	Setting $i = t = 0$ and $j = 1$ in Corollary~\ref{thm(x^{q^i+p^t}-x^{q^j+p^t})},
	we obtain that the compositional inverse of the permutation polynomial
	$x + \gamma \Tr_{q}^{q^n}(x^2 - x^{q+1})$ (with $\gamma \in \gf_q$)
	from Theorem~10 of \cite{zha2019permutation} is
	$$
	x - \gamma \Tr_{q}^{q^n}(x^2 - x^{q+1}).
	$$
	
	Taking $i = t = 0$ in Corollary~\ref{thm(x^{q^i+p^t}-x^{q^j+p^t})},
	we find that the compositional inverse of the permutation polynomial
	$x + \gamma \Tr_{q}^{q^n}(x^2 - x^{q^j+1})$ given by Theorem~3.5(1) in \cite{li2025permutation}
	is
	$$
	x - \gamma \Tr_{q}^{q^n}(x^2 - x^{q^j+1}).
	$$
	
	Choosing $i = 0, t = 1, p = 2,$ and $j = 1$ in Corollary~\ref{thm(x^{q^i+p^t}-x^{q^j+p^t})},
	our result implies that the permutation polynomial
	$x + \gamma \Tr_{q}^{q^2}(x^3 + x^{q+2})$ over $\gf_{q^2}$ from Theorem~3.1 of \cite{jiang2025new}
	is an \emph{involution}; that is, its compositional inverse is itself.
	
	Finally, with $i = 0, t = 1, p = 3,$ and $j = 1$ in Corollary~\ref{thm(x^{q^i+p^t}-x^{q^j+p^t})},
	we conclude that the compositional inverse of the permutation polynomial
	$x + \gamma \Tr_{q}^{q^2}(x^4 - x^{q+3})$ over $\gf_{q^2}$ (where $\gamma \in \gf_q$)
	given in Theorem~3.7 of \cite{li2025permutation} is
	$$
	x - \gamma \Tr_{q}^{q^2}(x^4 - x^{q+3}).
	$$	\end{remark}

Given that $\Tr_{q}^{q^n}(x)$ is a linearized polynomial, the following conclusion is obtained by combining Corollary \ref{colinear} with Corollary \ref{thm(x^{q^i+p^t}-x^{q^j+p^t})}. We omit the details here.

\begin{corollary}\label{colinearbino}
	Let $q$ be a  power of $p$ and $n$ be a positive integer with $p \mid n.$   Let $L(x)\in \gf_q[x]$ be an additive polynomial. For any $a\in \gf_q,$ $$f(x)=x+\Tr_q^{q^n}\bigl(L(x)+a(x^{q^i+p^t}-x^{q^j+p^t})\bigr)$$ is a permutation polynomial over $\gf_{q^n}$ and the compositional inverse of $$f^{-1}(x)=x-\Tr_q^{q^n}\bigl(L(x)+a(x^{q^i+p^t}-x^{q^j+p^t})\bigr).$$
\end{corollary}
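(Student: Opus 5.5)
The plan is to reduce the statement to Proposition~\ref{thlineartranslator} with $\gamma=1$. Put
$$g(x)=\Tr_q^{q^n}\bigl(L(x)+a(x^{q^i+p^t}-x^{q^j+p^t})\bigr),$$
which maps $\gf_{q^n}$ into $\gf_q$, so that $f(x)=x+g(x)$. It then suffices to show that $1$ is a $0$-linear translator of $g$ with respect to $S=\gf_q$, i.e. that $g(x+u)=g(x)$ for all $x\in\gf_{q^n}$ and $u\in\gf_q$. Since $b=0\neq-1$, Proposition~\ref{thlineartranslator} then gives at once that $f$ permutes $\gf_{q^n}$ and that $f^{-1}(x)=x-\frac{1}{0+1}g(x)=x-g(x)$, which is the claimed formula.

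The check of the translator property splits along the two summands inside the trace, using the additivity of $\Tr_q^{q^n}$.

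\emph{The $L$-part.} This is exactly the computation in Corollary~\ref{colinear}. Since $L$ is additive with coefficients in $\gf_q$, we have $L(u)\in\gf_q$. Hence
$$\Tr_q^{q^n}(L(x+u))=\Tr_q^{q^n}(L(x))+nL(u)=\Tr_q^{q^n}(L(x)),$$
because $p\mid n$.

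\emph{The binomial part.} Fix $u\in\gf_q$. Because $q^i$ is a power of $q$ (with $q^0=1$), we have $u^{q^i}=u$, and likewise $u^{q^j}=u$. Therefore
$$(x+u)^{q^i+p^t}=(x^{q^i}+u)(x^{p^t}+u^{p^t})=x^{q^i+p^t}+u^{p^t}x^{q^i}+u\,x^{p^t}+u^{1+p^t},$$
and analogously with $j$ in place of $i$. Subtracting, the terms $u\,x^{p^t}$ and $u^{1+p^t}$ cancel. This leaves
$$(x+u)^{q^i+p^t}-(x+u)^{q^j+p^t}=x^{q^i+p^t}-x^{q^j+p^t}+u^{p^t}\bigl(x^{q^i}-x^{q^j}\bigr).$$
Since $a u^{p^t}\in\gf_q$ and $\Tr_q^{q^n}(x^{q^i})=\Tr_q^{q^n}(x)=\Tr_q^{q^n}(x^{q^j})$, the extra term has zero trace. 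Hence the trace of $a$ times the binomial is invariant under $x\mapsto x+u$; this part does not even need $p\mid n$.

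Adding the two parts gives $g(x+u)=g(x)$, which completes the reduction. The only step needing care is the binomial expansion: one must use $u^{q^i}=u$ to see that the cross terms $u\,x^{p^t}$ and the constants $u^{1+p^t}$ coincide for $i$ and $j$, so that only a trace-zero linear remainder survives. Everything else follows directly from Corollary~\ref{colinear} and Proposition~\ref{thlineartranslator}.
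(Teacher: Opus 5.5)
Your proposal is correct and follows the route the paper indicates. The paper omits the details, saying only that the result comes from combining Corollary~\ref{colinear} with the corollary on $x^{q^i+p^t}-x^{q^j+p^t}$: by $\gf_q$-linearity of the trace, $1$ is a $0$-linear translator of each summand, hence of their sum, and Proposition~\ref{thlineartranslator} then applies. Your explicit expansion using $u^{q^i}=u$ simply supplies the translator check for the binomial part that the paper leaves unstated.
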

\begin{remark}
	Jiang et al. \cite{jiang2025new} investigated the permutation properties of four polynomials over $\gf_{q^2}$ (where $q$ is a power of $2$) defined by:
	\begin{align*}
		f_1(x) &= x + \gamma \Tr_q^{q^2}\bigl(x^3 + x^{q+2}\bigr), \\
		f_2(x) &= x + \gamma \Tr_q^{q^2}\bigl(x + x^2 + x^3 + x^{q+2}\bigr), \\
		f_3(x) &= x + \gamma \Tr_q^{q^2}\bigl(x^2 + x^3 + x^{q+2}\bigr), \\
		f_4(x) &= x + \gamma \Tr_q^{q^2}\bigl(x + x^3 + x^{q+2}\bigr),
	\end{align*}
	where $\gamma \in \gf_{q^2}$. According to Remark \ref{re(x^{q^i+p^t}-x^{q^j+p^t})} and Corollary \ref{colinearbino}, for any $\gamma \in \gf_q$, the polynomials $f_1(x)$, $f_2(x)$, $f_3(x)$ and $f_4(x)$ are all involutions. Thus Corollary \ref{colinearbino} provides a family of explicit involutory permutation polynomials over finite fields of characteristic 2.
\end{remark}

\medskip

Zha et al. \cite[Theorem 9]{zha2019permutation} proposed a class of permutation polynomials over $\gf_{q^n}$ of the form
$$
f(x)=x+\gamma\Tr_{q}^{q^n}\bigl(x^3-x^{q+2}\bigr),
$$
where $q$ is a prime power, $n$ is a positive integer, and $\gamma \in \gf_q^*$ satisfies
$2\gamma\Tr_{q}^{q^n}\bigl(c^{q+1}-c^2\bigr) \neq 1$
for any $c \in \gf_{q^n}$ with $\Tr_{q}^{q^n}\bigl(c^{q+2}-c^3\bigr)=0$. In what follows, we determine the compositional inverse of  this polynomial $f(x)$.
\begin{theorem}
	Let $n, k$ be positive integers and $q=p^k$. Assume $\gamma \in \gf_q^*$ satisfies $2\gamma\Tr_{q}^{q^n}(c^{q+1}-c^2)\neq 1$ for any $c \in \gf_{q^n}$ with $\Tr_{q}^{q^n}(c^{q+2}-c^3)=0.$	 Then the compositional inverse of $f(x)=x+\gamma\Tr_{q}^{q^n}(x^3-x^{q+2})$ over $\gf_{q^n}$ is given by
	$$f^{-1}(x)=x-\frac{\gamma\Tr_{q}^{q^n}(x^3-x^{q+2})}{1-2\gamma\Tr_{q}^{q^n}(x^{q+1}-x^{2})}.$$
\end{theorem}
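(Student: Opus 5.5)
The plan is to follow the pattern of the two previous theorems of Section~4: find polynomial expressions in $f(x)$ that recover $A:=\gamma\Tr_q^{q^n}(x^3-x^{q+2})$, and then apply Lemma~\ref{leff-} with $\psi_2(x)=x$, $\varphi_2=f$. Since $\gamma\in\gf_q$, we have $A\in\gf_q$, so $A^q=A$ and $f(x)=x+A$. Throughout, write
$$T(x)=\Tr_q^{q^n}(x^3-x^{q+2}),\qquad Q(x)=\Tr_q^{q^n}(x^{q+1}-x^2).$$

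The first key step is to expand the two trace expressions at $f(x)=x+A$.

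For the quadratic part,
$$(x+A)^2-(x+A)^{q+1}=x^2-x^{q+1}+A(x-x^q).$$
Since $\Tr_q^{q^n}(x-x^q)=0$, this gives $Q(f(x))=Q(x)$, so $Q$ is an invariant of $f$.

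For the cubic part, we use $(x+A)^{q+2}=(x^q+A)(x+A)^2$ and get
$$(x+A)^3-(x+A)^{q+2}=x^3-x^{q+2}+2A(x^2-x^{q+1})+A^2(x-x^q).$$
Taking traces, and using the invariance of $Q$,
$$T(f(x))=T(x)\bigl(1-2\gamma Q(x)\bigr)=T(x)\bigl(1-2\gamma Q(f(x))\bigr).$$

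The second step, which I expect to be the only delicate point, is showing that the denominator $1-2\gamma Q(f(x))$ never vanishes. This is where the hypothesis enters. Put $c=f(x)$.
\begin{itemize}
\item If $T(c)\neq 0$, the displayed identity forces $1-2\gamma Q(c)\neq0$ immediately.
\item If $T(c)=0$, the hypothesis $2\gamma\Tr_q^{q^n}(c^{q+1}-c^2)\neq 1$ says exactly $1-2\gamma Q(c)\neq 0$. The identity then also forces $T(x)=0$.
\end{itemize}
In both cases
$$A=\gamma T(x)=\frac{\gamma\,T(f(x))}{1-2\gamma Q(f(x))}.$$
To keep everything polynomial, the quotient is read as a multiplication by $\bigl(1-2\gamma Q(\cdot)\bigr)^{q-2}$.

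Finally, I would apply Lemma~\ref{leff-} with
$$\psi_1(x)=\frac{\gamma T(x)}{1-2\gamma Q(x)},\qquad \varphi_1=\psi_1\circ f=A,\qquad \psi_2(x)=x,\qquad \varphi_2=f.$$
Then $x=\varphi_2(x)-\varphi_1(x)$. This shows at once that $f$ is injective, hence a permutation of $\gf_{q^n}$, and that
$$f^{-1}(x)=x-\frac{\gamma\Tr_q^{q^n}(x^3-x^{q+2})}{1-2\gamma\Tr_q^{q^n}(x^{q+1}-x^2)}.$$

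Apart from the nonvanishing argument in the second step, the rest is routine binomial expansion together with the vanishing of $\Tr_q^{q^n}(x-x^q)$.
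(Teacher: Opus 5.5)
Your proposal is correct and follows essentially the paper's argument. Both derive $T(f(x))=\bigl(1-2\gamma Q(f(x))\bigr)T(x)$, use the hypothesis at $c=f(x)$ to handle the case $T(f(x))=0$, and then apply Lemma~\ref{leff-} with $\psi_1=\gamma T/(1-2\gamma Q)$ and $\psi_2(x)=x$. The only difference is cosmetic: you obtain the identity from the invariance $Q(f(x))=Q(x)$ plus a separate expansion of the cubic terms, whereas the paper takes traces of a single combined expansion of $f^3-f^{q+2}+2\gamma\,(f^{q+1}-f^2)\,T(x)$; your treatment of the vanishing-denominator case is, if anything, more explicit than the paper's.
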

\begin{proof}
	Since $\gamma \in \gf_q
	$, we have
	\begin{align*}
		&f^3(x)-f^{q+2}(x)+2\gamma f^{q+1}(x)\Tr_{q}^{q^n}(x^3-x^{q+2})-2\gamma f^2(x)\Tr_{q}^{q^n}(x^3-x^{q+2})\\
		=&\ \left(f^3(x)-2\gamma f^2(x)\Tr_{q}^{q^n}(x^3-x^{q+2})\right)-\left(f^{q+2}(x)-2\gamma f^{q+1}(x)\Tr_{q}^{q^n}(x^3-x^{q+2})\right)\\
		=&\,f^2(x)\left(x-\gamma\Tr_{q}^{q^n}(x^3-x^{q+2})\right)-f^{q+1}(x)\left(x-\gamma\Tr_{q}^{q^n}(x^3-x^{q+2})\right)\\
		=&\,\left(x^2-x^{q+1}+(x-x^q)\gamma\Tr_{q}^{q^n}(x^3-x^{q+2})\right)\left(x-\gamma\Tr_{q}^{q^n}(x^3-x^{q+2})\right)\\		=&\,(x^3-x^{q+2})+(x-x^q)\gamma^2\Tr_{q}^{q^n}(x^3-x^{q+2})^2. 	\end{align*}
	Taking the trace on both sides yields $$\Tr_{q}^{q^n}\left(f^3(x)-f^{q+2}(x)\right)+2\gamma\Tr_{q}^{q^n}\left(f^{q+1}(x)-f^2(x)\right)\Tr_{q}^{q^n}(x^3-x^{q+2})=\Tr_{q}^{q^n}(x^3-x^{q+2}),$$
	which is equivalent to	
	$$\Tr_{q}^{q^n}\left(f^3(x)-f^{q+2}(x)\right)=\left(1-2\gamma\Tr_{q}^{q^n}\left(f^{q+1}(x)-f^2(x)\right)\right)\Tr_{q}^{q^n}(x^3-x^{q+2}).$$
	
	If the right-hand side above is non-zero, then $1-2\gamma\Tr_{q}^{q^n}\left(f^{q+1}(x)-f^2(x)\right)\neq0$. We thus obtain
	\begin{equation}\label{th2eq}
		\Tr_{q}^{q^n}(x^3-x^{q+2})=\frac{\Tr_{q}^{q^n}\left(f^3(x)-f^{q+2}(x)\right)}{1-2\gamma\Tr_{q}^{q^n}\left(f^{q+1}(x)-f^2(x)\right)}.\end{equation}
	
	If the right-hand side is zero,  the same expression \eqref{th2eq} remains valid due to the assumption on $\gamma$; namely, $\gamma \in \gf_q$ satisfies $2\gamma\Tr_{q}^{q^n}(c^{q+1}-c^2)\neq 1$ for every $c \in \gf_{q^n}$ with $\Tr_{q}^{q^n}(c^{q+2}-c^3)=0,$

	Taking $\psi_1(x)=\frac{\Tr_{q}^{q^n}\left(x^3-x^{q+2}\right)}{1-2\gamma\Tr_{q}^{q^n}\left(x^{q+1}-x^2\right)},$ $\psi_2(x)=x$, $\varphi_1(x)=\frac{\Tr_{q}^{q^n}\left(f^3(x)-f^{q+2}(x)\right)}{1-2\gamma\Tr_{q}^{q^n}\left(f^{q+1}(x)-f^2(x)\right)}$ and $\varphi_2(x)=f(x)$ in  Lemma \ref{leff-}, then combing $f(x)=x+\gamma \Tr_{q}^{q^n}(x^3-x^{q+2})$ with the expression \eqref{th2eq},  we immediately have $$\varphi_2(x)-\gamma\varphi_1(x)=x.$$
	It follows from Lemma \ref{leff-}  that the compositional inverse of $f(x)$ over $\gf_{q^n}$ is given by 	$$f^{-1}(x)=x-\frac{\gamma\Tr_{q}^{q^n}(x^3-x^{q+2})}{1-2\gamma\Tr_{q}^{q^n}(x^{q+1}-x^{2})}.$$
	This completes the proof. 	\end{proof}

\section*{Declarations}

\begin{itemize}
	\item \textbf{ Conflicts of Interest} There is no conflict of interest.
	\item
The research of Danyao Wu is partially supported by the  National Natural Science Foundation of China (Grant No. 12501006).	The research of Pingzhi Yuan is partially supported by the National Natural Science Foundation of China (Grant Nos. 12571003 and 12171163) and the Guangdong Basic and Applied Basic Research Foundation (Grant No. 2024A1515010589). The research of Zilong He is partially supported by the  National Natural Science Foundation of China (Grant No. 12301013).
\end{itemize}

\bibliography{sn-bibliography1}

\end{document}